\journal{...}
\newtheorem{theorem}{Theorem}[section]
\newtheorem{corollary}{Corollary}[section]
\newtheorem{lemma}{Lemma}[section]
\newdefinition{remark}{Remark}[section]
\newdefinition{definition}{Definition}[section]
\newdefinition{example}{Example}[section]
\begin{document}
\begin{frontmatter}
\title{Fixed point theorems for $\alpha$--contractive mappings of Meir--Keeler type and applications}
\author[mb]{Maher Berzig\corref{cor1}}
\cortext[cor1]{Corresponding author}
\ead{maher.berzig@gmail.com}
\author[mdr]{Mircea-Dan Rus}
\ead{rus.mircea@math.utcluj.ro}
\address[mb]{Department of Mathematics, Tunis College of Sciences and Techniques, 5 Avenue Taha Hussein, Tunis, Tunisia}
\address[mdr]{Department of Mathematics, Technical University of Cluj-Napoca, Str. Memorandumului 28, 400114 Cluj-Napoca, Romania}
\begin{abstract}
In this paper, we introduce the notion of $\alpha$--contractive mapping of Meir--Keeler type in complete metric spaces and prove new theorems which assure the existence, uniqueness and iterative approximation of the fixed point for this type of contraction. The presented theorems extend, generalize and improve several existing results in literature.  To validate our results, we establish the existence and uniqueness of solution to a class of third order two point boundary value problems.
\end{abstract}
\begin{keyword}
Fixed point \sep
$\alpha$--contractive mapping of Meir--Keeler type \sep
coupled fixed point \sep
cyclic contraction \sep
ordered metric space \sep
two point boundary value problem
\MSC[2010] 47H10 \sep 34B15
\end{keyword}
\end{frontmatter}

\section{Introduction}

In \cite{Meir1969}, Meir and Keeler introduced a new contraction condition for
self-maps in metric spaces and generalized the well known Banach contraction
principle as follows.

\begin{theorem}
[\cite{Meir1969}]\label{Th:0}Let $(X,d)$ be a complete metric space and
$T:X\rightarrow X$. Assume that for every $\varepsilon>0$, there exists
$\delta(\varepsilon)>0$ such that:
\[
x,y\in X:\varepsilon\leq d(x,y)<\varepsilon+\delta(\varepsilon)\Rightarrow
d(Tx,Ty)<\varepsilon.
\]
Then $T$ has a unique fixed point $x^{\ast}\in X$ and $T^{n}x\rightarrow
x^{\ast}$ (as $n\rightarrow\infty$) for every $x\in X$, where $T^{n}$ denotes
the $n$-th order iterate of $T$.
\end{theorem}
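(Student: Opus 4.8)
The plan is to run the Picard iteration and use the standard machinery for Meir--Keeler type contractions. First I would record the elementary consequence of the hypothesis that $d(Tx,Ty)<d(x,y)$ whenever $x\neq y$: taking $\varepsilon=d(x,y)>0$ places the pair in the range $\varepsilon\leq d(x,y)<\varepsilon+\delta(\varepsilon)$, so the contraction condition yields $d(Tx,Ty)<\varepsilon=d(x,y)$. In particular $T$ is nonexpansive, hence continuous, and uniqueness is immediate: two distinct fixed points $x^{\ast}\neq y^{\ast}$ would satisfy $d(x^{\ast},y^{\ast})=d(Tx^{\ast},Ty^{\ast})<d(x^{\ast},y^{\ast})$, a contradiction.

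For existence, fix $x_{0}\in X$ and set $x_{n+1}=Tx_{n}$ and $c_{n}=d(x_{n},x_{n+1})$. If some $c_{n}=0$ then $x_{n}$ is already fixed, so I may assume $c_{n}>0$ for all $n$. By the weak contraction above, $(c_{n})$ is strictly decreasing, hence converges to some $c\geq 0$. To see $c=0$, I would argue by contradiction: if $c>0$, apply the hypothesis with $\varepsilon=c$ to get $\delta=\delta(c)$, choose $N$ with $c\leq c_{N}<c+\delta$ (possible since $c_{n}\downarrow c$ strictly), and deduce $c_{N+1}=d(Tx_{N},Tx_{N+1})<c$, which contradicts $c_{N+1}\geq c$.

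The crux of the argument — and the step I expect to be the main obstacle — is proving that $(x_{n})$ is Cauchy, since the hypothesis controls only a single application of $T$. Given $\eta>0$, I would set $\varepsilon=\eta/2$, take $\delta=\delta(\varepsilon)$ with $\delta\leq\varepsilon$ (without loss of generality, since shrinking $\delta$ only narrows the range in the hypothesis), and pick $N$ so that $c_{n}<\delta$ for all $n\geq N$. The key claim is that $d(x_{n},x_{m})<\varepsilon+\delta$ for all $m\geq n\geq N$, which I would prove by induction on $m$: in the inductive step one writes $d(x_{n},x_{m+1})\leq c_{n}+d(Tx_{n},Tx_{m})$ and bounds $d(Tx_{n},Tx_{m})<\varepsilon$ by splitting into the cases $d(x_{n},x_{m})<\varepsilon$ (use the weak contraction) and $\varepsilon\leq d(x_{n},x_{m})<\varepsilon+\delta$ (use the Meir--Keeler condition directly), so that $d(x_{n},x_{m+1})<\delta+\varepsilon=\varepsilon+\delta$. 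This yields $d(x_{n},x_{m})<2\varepsilon=\eta$, establishing the Cauchy property.

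Finally, completeness of $X$ gives $x_{n}\to x^{\ast}$ for some $x^{\ast}\in X$, and continuity of $T$ lets me pass to the limit in $x_{n+1}=Tx_{n}$ to conclude $Tx^{\ast}=x^{\ast}$. Since the construction applies to an arbitrary starting point and the fixed point is unique, every orbit $T^{n}x$ must converge to the same $x^{\ast}$, which completes the proof.
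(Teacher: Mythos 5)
Your proof is correct, and its core --- the monotone decrease of $d(x_n,x_{n+1})$ to $0$, followed by the Cauchy argument via the invariance of the ball of radius $\varepsilon+\delta$ under one more application of $T$, split into the cases $d<\varepsilon$ (strict contractiveness) and $\varepsilon\le d<\varepsilon+\delta$ (the Meir--Keeler condition) --- is essentially the same mechanism the paper uses in proving its generalization (Theorem \ref{Th:1}, via the set $Y$ and the claim $T^N x\in Y$), of which Theorem \ref{Th:0} is the special case $\alpha\equiv 1$, $N=1$. The only cosmetic difference is that you anchor the induction at each $x_n$ and get the bound $2\varepsilon$ directly, whereas the paper anchors at a single $x_{k+q}$ and passes through it with the triangle inequality at the end.
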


In another direction, Ran and Reurings \cite{Ran2004} extended Banach's
contraction principle to the setting of ordered metric spaces and obtained
some interesting applications to matrix equations. Later on, the results of
Ran and Reurings were extended and generalized by many authors (e.g.,
\cite{Agarwal2008,Berzig2012a,Berzig2012b,Bhaskar2006,Kirk2003,Nieto2005,Nieto2007,SametT2012,Samet2010,Rus2011}
and the references therein). In particular, Harjani et al. \cite{Harjani2011}
unified these two directions by studying the fixed points of Meir--Keeler type
contractions in ordered metric spaces.

Very recently, Samet et al.\ \cite{SametVV2012} took a new approach to the
generalization of Banach's contraction principle and introduced the concept of
$\alpha-\psi$--contractive type mappings, while establishing various fixed
point theorems for such mappings in the setting of complete metric spaces. In
particular, this new approach contains many of the generalizations considered
in
\cite{Agarwal2008,Berzig2012a,Berzig2012b,Bhaskar2006,Kirk2003,Nieto2005,Nieto2007,SametT2012,Samet2010,Rus2011,Harjani2011,Ran2004}
as special cases.

In this context, the aim of this paper is to unify the concepts of
Meir--Keeler contraction \cite{Meir1969} and $\alpha-\psi$--contractive type
mapping \cite{SametVV2012} and establish some new fixed point theorems in
complete metric spaces for such mappings. Several consequences of our results
are presented in Section \ref{Sec:3}. We validate our results with an
application to the study of the existence and uniqueness of solutions for a
class of third order two point boundary value problems.


\section{Main results\label{Sec:2}}


\subsection{Preliminaries}

Throughout this paper, let $\mathbb{N}$ denote the set of all non-negative
integers, $\mathbb{Z}$ the set of all integers and $\mathbb{R}$ the set of all
real numbers. We start by introducing the concept of $\alpha$--contractive
mapping of Meir--Keeler type. Subsequently, we prove some lemmas useful later.

In what follows, let $(X,d)$ be a metric space, $T:X\rightarrow X$ and
$\alpha:X\times X\rightarrow\lbrack0,+\infty)$, if not stated otherwise.

\begin{definition}
We say that $T$ is an $\alpha$--contractive mapping of Meir--Keeler type (with
respect to $d$) if for all $\varepsilon>0$, there exists $\delta
(\varepsilon)>0$ such that
\begin{equation}
x,y\in X:\varepsilon\leq d(x,y)<\varepsilon+\delta(\varepsilon)\Rightarrow
\alpha(x,y)d(Tx,Ty)<\varepsilon. \label{eq:01}%
\end{equation}

\end{definition}

\begin{lemma}
\label{Lemma:1}If $T$ is an $\alpha$--contractive mapping of Meir--Keeler
type, then
\[
\alpha(x,y)d(Tx,Ty)<d(x,y)\quad\text{for all }x,y\in X\text{ with }x\neq y.
\]

\end{lemma}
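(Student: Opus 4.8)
The plan is to prove Lemma 1.1 directly from the definition of an $\alpha$-contractive mapping of Meir–Keeler type. The statement I want is that $\alpha(x,y)d(Tx,Ty) < d(x,y)$ whenever $x \neq y$. The natural idea is to instantiate the Meir–Keeler condition (1) at a specific, cleverly chosen value of $\varepsilon$ determined by the points $x$ and $y$ themselves.

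Let me check: fix $x,y \in X$ with $x \neq y$, so that $d(x,y) > 0$. I would set $\varepsilon := d(x,y)$. With this choice, $\varepsilon > 0$, so by the defining property there exists some $\delta(\varepsilon) > 0$ such that the implication in (1) holds. Now I observe that $x,y$ trivially satisfy the hypothesis of that implication: indeed $\varepsilon \leq d(x,y) < \varepsilon + \delta(\varepsilon)$, because $d(x,y) = \varepsilon$ gives the left inequality as an equality, and $\delta(\varepsilon) > 0$ gives the strict right inequality. Therefore the conclusion of (1) applies to this pair, yielding $\alpha(x,y)d(Tx,Ty) < \varepsilon = d(x,y)$, which is exactly what I want.

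The argument is essentially a one-line specialization, so there is no serious obstacle — the only subtle point worth being careful about is that the hypothesis interval in (1) is half-open, with $\varepsilon \leq d(x,y)$ permitting equality on the left; this is precisely what lets me take $\varepsilon = d(x,y)$ and still land inside the interval. If the condition had required a strict inequality $\varepsilon < d(x,y)$ on the left, this direct substitution would fail and I would instead need to approach $d(x,y)$ from below through a limiting argument. As stated, however, the half-open form makes the choice $\varepsilon = d(x,y)$ legitimate, and the lemma follows immediately for every pair $x \neq y$.
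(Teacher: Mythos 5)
Your argument is correct and is precisely the paper's own proof: take $\varepsilon:=d(x,y)>0$, note that the pair $(x,y)$ lies in the half-open interval $[\varepsilon,\varepsilon+\delta(\varepsilon))$ since $\delta(\varepsilon)>0$, and read off the conclusion of the contractive condition. Your remark about why the non-strict left inequality is essential is a nice observation, but the substance of the proof is identical.
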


\begin{proof}
Fix $x,y\in X$ with $x\neq y$ and let $\varepsilon:=d(x,y)>0$. Then, by
(\ref{eq:01}), $\alpha(x,y)d(Tx,Ty)<\varepsilon=d(x,y)$, which concludes the proof.
\end{proof}

\begin{definition}
[\cite{SametVV2012}]We say that $T$ is $\alpha$--admissible if
\[
x,y\in X:\alpha(x,y)\geq1\Rightarrow\alpha(Tx,Ty)\geq1.
\]

\end{definition}

\begin{example}
Let $X=\mathbb{R}$. Define $\alpha:X\times X\rightarrow\lbrack0,+\infty)$ by
\begin{equation}
\alpha(x,y)=\left\{
\begin{array}
[c]{ll}%
\mathrm{e}^{x-y} & \text{if}\quad x\geq y,\\
0 & \text{if}\quad x<y.
\end{array}
\right.  \label{eq:02}%
\end{equation}
Then
\[
\alpha(x,y)\geq1\Leftrightarrow x\geq y\text{\quad}(x,y\in X),
\]
hence a mapping $T:X\rightarrow X$ is $\alpha$--admissible \emph{iff} it is nondecreasing.
\end{example}

\begin{lemma}
\label{Lemma:2}Assume that $T$ is $\alpha$--admissible and $\alpha
$--contractive of Meir--Keeler type. Let $x,y\in X$ such that $\alpha
(x,y)\geq1$. Then%
\begin{equation}
\alpha(T^{n}x,T^{n}y)\geq1\quad\text{for all }n\in\mathbb{N}\text{,}
\label{eq:03}%
\end{equation}
the sequence $\left\{  d(T^{n}x,T^{n}y)\right\}  $ is nonincreasing, and%
\[
d(T^{n}x,T^{n}y)\rightarrow0\quad\text{(as }n\rightarrow\infty\text{).}%
\]

\end{lemma}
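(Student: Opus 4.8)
The plan is to establish the three claims in sequence, since each builds on the previous one.

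First I would prove the admissibility claim \eqref{eq:03} by induction on $n$. The base case $n=0$ is just the hypothesis $\alpha(x,y)\geq 1$. For the inductive step, assuming $\alpha(T^{n}x,T^{n}y)\geq 1$, I would apply the $\alpha$--admissibility of $T$ to the points $T^{n}x$ and $T^{n}y$ to conclude $\alpha(T^{n+1}x,T^{n+1}y)=\alpha(T(T^{n}x),T(T^{n}y))\geq 1$. This part is entirely routine.

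Next I would show that the sequence $d_{n}:=d(T^{n}x,T^{n}y)$ is nonincreasing. Fix $n\in\mathbb{N}$ and write $u=T^{n}x$, $v=T^{n}y$. If $u=v$, then $d_{n+1}=d(Tu,Tv)=0\leq d_{n}$ trivially. If $u\neq v$, I would combine Lemma~\ref{Lemma:1} with the bound $\alpha(u,v)\geq 1$ from \eqref{eq:03}: since $d_{n+1}=d(Tu,Tv)\leq\alpha(u,v)\,d(Tu,Tv)<d(u,v)=d_{n}$, we get $d_{n+1}<d_{n}$. Either way $d_{n+1}\leq d_{n}$, so the sequence is nonincreasing.

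Finally, for the convergence $d_{n}\to 0$, I would use that a nonincreasing sequence bounded below by $0$ converges to some limit $\ell\geq 0$, and argue by contradiction that $\ell=0$. Suppose $\ell>0$. Applying the Meir--Keeler condition~\eqref{eq:01} with $\varepsilon=\ell$, we obtain a $\delta=\delta(\ell)>0$. Since $d_{n}\downarrow\ell$, there exists $N$ with $\ell\leq d_{N}<\ell+\delta$, so \eqref{eq:01} (together with $\alpha(T^{N}x,T^{N}y)\geq 1$) yields $d_{N+1}\leq\alpha(T^{N}x,T^{N}y)\,d(T^{N+1}x,T^{N+1}y)<\ell$. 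But this contradicts $d_{N+1}\geq\ell$, which holds because the sequence decreases to $\ell$. Hence $\ell=0$, completing the proof. The only subtle point to handle carefully is the use of the $\alpha$ factor: the hypotheses give the contraction with the weight $\alpha(\cdot,\cdot)$, and it is precisely the admissibility guarantee $\alpha(T^{n}x,T^{n}y)\geq 1$ that lets me drop that factor to a lower bound of $1$ and thereby transfer the Meir--Keeler estimate on $\alpha\,d(T\cdot,T\cdot)$ into the needed estimate on $d$ alone. I expect this interplay to be the main conceptual step, though it is not computationally difficult.
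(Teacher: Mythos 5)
Your proposal is correct and follows essentially the same route as the paper: induction via $\alpha$--admissibility for \eqref{eq:03}, the case split $T^{n}x=T^{n}y$ versus $T^{n}x\neq T^{n}y$ combined with Lemma \ref{Lemma:1} for monotonicity, and the Meir--Keeler contradiction argument at the limit $\ell>0$. No gaps.
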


\begin{proof}
Since $T$ is $\alpha$--admissible and $\alpha(x,y)\geq1$, then (\ref{eq:03})
follows simply by induction on $n$.

Next, let $n\in\mathbb{N}$. If $T^{n}x\neq T^{n}y$, then, by (\ref{eq:03}) and
Lemma \ref{Lemma:1}, it follows that%
\[
d(T^{n+1}x,T^{n+1}y)\leq\alpha(T^{n}x,T^{n}y)d(T^{n+1}x,T^{n+1}y)=\alpha
(T^{n}x,T^{n}y)d(T(T^{n}x),T(T^{n}y))<d(T^{n}x,T^{n}y).
\]
Else, if $T^{n}x=T^{n}y$, then $d(T^{n+1}x,T^{n+1}y)=d(T^{n}x,T^{n}y)$.
Concluding, $\left\{  d(T^{n}x,T^{n}y)\right\}  $ is nonincreasing, hence
convergent to some $\varepsilon\geq0$.

Assume that $\varepsilon>0$, and let $p\in\mathbb{N}$ such that $\varepsilon
\leq d(T^{p}x,T^{p}y)<\varepsilon+\delta(\varepsilon)$. Then $\alpha
(T^{p}x,T^{p}y)d(T(T^{p}x),T(T^{p}y))<\varepsilon$, and further, by
(\ref{eq:03}), we get $d(T^{p+1}x,T^{p+1}y)<\varepsilon$, which is clearly not
possible, hence our assumption on $\varepsilon$ is wrong. Concluding, we have
necessarily $\varepsilon=0$.
\end{proof}

\begin{definition}
We say that a sequence $\{x_{n}\}$ in $X$ is $(T,\alpha)$--orbital if
$x_{n}=T^{n}x_{0}$ and $\alpha(x_{n},x_{n+1})\geq1$ for all $n\in\mathbb{N}$.
\end{definition}

\begin{definition}
We say that $T$ is $\alpha$--orbitally continuous if for every $(T,\alpha
)$--orbital sequence $\{x_{n}\}$ in $X$ such that $x_{n}\rightarrow x\in X$ as
$n\rightarrow+\infty$, there exists a subsequence $\{x_{n(k)}\}$ of
$\{x_{n}\}$ such that $Tx_{n(k)}\rightarrow Tx$ as $k\rightarrow+\infty$.
\end{definition}

\begin{remark}
Clearly, if $T$ is continuous, then $T$ is $\alpha$--orbitally continuous (for
any $\alpha$).
\end{remark}

\begin{definition}
We say that $(X,d)$ is $(T,\alpha)$--regular if for every $(T,\alpha
)$--orbital sequence $\{x_{n}\}$ in $X$ such that $x_{n}\rightarrow x\in X$ as
$n\rightarrow+\infty$, there exists a subsequence $\{x_{n(k)}\}$ of
$\{x_{n}\}$ such that $\alpha(x_{n(k)},x)\geq1$ for all $k$.
\end{definition}

\begin{definition}
We say that $(X,d)$ is $\alpha$--regular if for every sequence $\{x_{n}\}$ in
$X$ such that $x_{n}\rightarrow x\in X$ as $n\rightarrow+\infty$ and
$\alpha(x_{n},x_{n+1})\geq1$ for all $n$, there exists a subsequence
$\{x_{n(k)}\}$ of $\{x_{n}\}$ such that $\alpha(x_{n(k)},x)\geq1$ for all $k$.
\end{definition}

\begin{remark}
Clearly, if $(X,d)$ is $\alpha$--regular, then it is also $(T,\alpha
)$--regular (for any $T$).
\end{remark}

\begin{example}
Let $d$ be the usual (Euclidian) distance on $\mathbb{R}$, and $\alpha
:\mathbb{R}\times\mathbb{R}\rightarrow\lbrack0,+\infty)$ given by
(\ref{eq:02}). Then $(\mathbb{R},d)$ is $\alpha$--regular.
\end{example}

\begin{definition}
Let $N\in\mathbb{N}$. We say that $\alpha$ is $N$--transitive (on $X$) if
\[
x_{0},x_{1},\dots,x_{N+1}\in X:\alpha(x_{i},x_{i+1})\geq1\text{ for all }%
i\in\{0,1,\ldots,N\}\Longrightarrow\alpha(x_{0},x_{N+1})\geq1.
\]
In particular, we say that $\alpha$ is transitive if it is $1$--transitive,
i.e.,%
\[
x,y,z\in X:\alpha(x,y)\geq1,~\alpha(y,z)\geq1\Longrightarrow\alpha(x,z)\geq1.
\]

\end{definition}

The following remarks are immediate consequences of the previous definition.

\begin{remark}
Any function $\alpha:X\times X\rightarrow\lbrack0,+\infty)$ is $0$-transitive.
\end{remark}

\begin{remark}
If $\alpha$ is $N$ transitive, then it is $kN$--transitive for all
$k\in\mathbb{N}$.
\end{remark}

\begin{remark}
If $\alpha$ is transitive, then it is $N$--transitive for all $N\in\mathbb{N}$.
\end{remark}

\begin{example}
Let $X=\mathbb{R}$. Then $\alpha$ defined by (\ref{eq:02}) is transitive.
\end{example}

\begin{example}
\label{Ex:1}Let $N\in\mathbb{N}\setminus\{0\}$ and $\left\{  A_{1}%
,\ldots,A_{N}\right\}  $ a family of nonempty sets. Let $X=\bigcup_{i=1}%
^{N}A_{i}$ and $R=\bigcup_{i=1}^{N}\left(  A_{i}\times A_{i+1}\right)  $ (with
$A_{N+1}:=A_{1}$). Define $\alpha:X\times X\rightarrow\lbrack0,+\infty)$ by
\[
\alpha(x,y)=\left\{
\begin{array}
[c]{ll}%
1, & \text{if }(x,y)\in R\\
0, & \text{otherwise.}%
\end{array}
\right.
\]
Then $\alpha$ is $N$--transitive, but not necessarily transitive (see, also,
Corollary \ref{Cor:7}).
\end{example}

\begin{definition}
Let $x,y\in X$. A vector $\zeta=(z_{0},z_{1},\ldots,z_{n})\in X^{n+1}$ is
called an $\alpha$--chain (of order $n$) from $x$ to $y$ if $z_{0}=x$,
$z_{n}=y$ and, for every $i\in\{1,2,\ldots,n\}$,
\[
\alpha(z_{i-1},z_{i})\geq1\text{ or }\alpha(z_{i},z_{i-1})\geq1\text{.}%
\]

\end{definition}

\begin{definition}
We say that $X$ is $\alpha$--connected if for every $x,y\in X$ with $x\neq y$,
there exists an $\alpha$--chain from $x$ to $y$.
\end{definition}


\subsection{Existence and uniqueness of fixed points}

Now, we are ready to present and prove the first main result of the paper.

\begin{theorem}
\label{Th:1}Let $(X,d)$ be a complete metric space, $\alpha:X\times
X\rightarrow\lbrack0,+\infty)$ a $N$--transitive mapping (for some
$N\in\mathbb{N}\setminus\{0\}$) and $T:X\rightarrow X$ an $\alpha
$--contractive mapping of Meir--Keeler type satisfying the following conditions:

\begin{enumerate}
\item[\textrm{(A1)}] $T$ is $\alpha$--admissible;

\item[\textrm{(A2)}] there exists $x_{0}\in X$ such that $\alpha(x_{0}%
,Tx_{0})\geq1$;

\item[\textrm{(A3)}] $T$ is $\alpha$--orbitally continuous.
\end{enumerate}

Then $T$ has a fixed point, that is, there exists $x^{\ast}\in X$ such that
$Tx^{\ast}=x^{\ast}$.
\end{theorem}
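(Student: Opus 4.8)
The plan is to construct a Picard iteration sequence starting from the point $x_0$ guaranteed by (A2), show it is Cauchy, and then use $\alpha$-orbital continuity to pass the fixed-point equation to the limit.

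First I would set $x_n := T^n x_0$ and observe that, by (A1) and (A2), the sequence $\{x_n\}$ is $(T,\alpha)$-orbital: indeed $\alpha(x_0,x_1)=\alpha(x_0,Tx_0)\geq 1$, and $\alpha$-admissibility propagates this to $\alpha(x_n,x_{n+1})\geq 1$ for all $n$ by induction. Applying Lemma~\ref{Lemma:2} with $y=Tx_0$ (so that $T^n y = x_{n+1}$) shows that the consecutive distances $d_n := d(x_n,x_{n+1})$ form a nonincreasing sequence converging to $0$. If some $d_n=0$ then $x_n$ is already a fixed point and we are done, so I assume $x_n\neq x_{n+1}$ for all $n$ throughout.

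The heart of the argument is proving that $\{x_n\}$ is Cauchy, and this is where the $N$-transitivity hypothesis does its essential work and where I expect the main obstacle to lie. The difficulty is that the Meir--Keeler condition \eqref{eq:01} only controls $\alpha(x,y)d(Tx,Ty)$ when $\alpha(x,y)\geq 1$, but for a general pair $x_m,x_n$ of points along the orbit we have no reason to know that $\alpha(x_m,x_n)\geq 1$ — only consecutive terms are controlled directly. This is exactly what $N$-transitivity repairs: chaining $N+1$ consecutive inequalities $\alpha(x_i,x_{i+1})\geq 1$ yields $\alpha(x_m,x_{m+N+1})\geq 1$, so the contractive estimate can be applied to pairs of indices differing by $N+1$. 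I would therefore argue by contradiction in the standard Meir--Keeler fashion: suppose $\{x_n\}$ is not Cauchy, fix $\varepsilon>0$ and its associated $\delta(\varepsilon)$ for which no tail is $\varepsilon$-close, and extract indices $m_k > n_k \to \infty$ (minimal $m_k$) with $d(x_{n_k},x_{m_k})\geq \varepsilon$ while $d(x_{n_k},x_{m_k-1})<\varepsilon$. Using the triangle inequality together with $d_n\to 0$, I would show $d(x_{n_k},x_{m_k})\to\varepsilon$ from above, so that eventually $\varepsilon\leq d(x_{n_k},x_{m_k})<\varepsilon+\delta(\varepsilon)$. The technical subtlety is arranging that the relevant pair on which \eqref{eq:01} is invoked is of the form permitting $\alpha\geq 1$: I would shift indices by multiples of $N+1$ (using $d_n\to 0$ to absorb the shifts into arbitrarily small triangle-inequality errors), so that $\alpha(x_{n_k - r},x_{m_k - r})\geq 1$ for a suitable $r$ divisible by $N+1$, and then \eqref{eq:01} forces $d(x_{n_k-r+1},x_{m_k-r+1})<\varepsilon$, contradicting the lower bound once the error terms are made small. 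This contradiction establishes the Cauchy property.

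Once $\{x_n\}$ is Cauchy, completeness of $(X,d)$ gives a limit $x^\ast$ with $x_n\to x^\ast$. Since $\{x_n\}$ is $(T,\alpha)$-orbital and $T$ is $\alpha$-orbitally continuous by (A3), there is a subsequence $\{x_{n(k)}\}$ with $Tx_{n(k)}\to Tx^\ast$. But $Tx_{n(k)} = x_{n(k)+1}$ is itself a subsequence of $\{x_n\}$, hence converges to $x^\ast$; by uniqueness of limits $Tx^\ast = x^\ast$, completing the proof. The only genuinely delicate point is the Cauchy estimate and the bookkeeping of the $N+1$ index shift; the rest is routine.
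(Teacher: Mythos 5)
Your overall skeleton (Picard iteration from $x_0$, Lemma~\ref{Lemma:2} for the orbital property and $d(x_n,x_{n+1})\to 0$, then completeness plus $\alpha$--orbital continuity at the end) matches the paper's proof, and you correctly identify the Cauchy step as the crux. But the Cauchy argument you sketch has two genuine gaps. First, translating \emph{both} indices by the same amount $r$ cannot produce $\alpha(x_{n_k-r},x_{m_k-r})\geq 1$: what $N$--transitivity, combined with the consecutive links $\alpha(x_i,x_{i+1})\geq 1$, controls is pairs $x_i,x_j$ whose index difference $j-i$ lies in $\{1,N+1,2N+1,\dots\}$, i.e.\ is congruent to $1$ modulo $N$ (not the multiples of $N+1$), and the difference $m_k-n_k$ is invariant under your shift. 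To repair this you must perturb \emph{one} endpoint only, replacing $m_k$ by some $m_k'$ with $m_k'-n_k\equiv 1\pmod{N}$ and $|m_k'-m_k|<N$, controlling the perturbation by at most $N-1$ consecutive small steps; this is precisely the bookkeeping the paper performs with the set $Y$ and the residue function $q(\cdot)$.

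Second, and more fundamentally, your contradiction scheme does not close. Even granting $\alpha\geq 1$ and $\varepsilon\leq d(x_{n_k},x_{m_k})<\varepsilon+\delta(\varepsilon)$, condition \eqref{eq:01} only yields $d(x_{n_k+1},x_{m_k+1})<\varepsilon$, with no uniform gap below $\varepsilon$; adding back the $o(1)$ triangle-inequality errors gives $d(x_{n_k},x_{m_k})<\varepsilon+o(1)$, which is perfectly consistent with the lower bound $d(x_{n_k},x_{m_k})\geq\varepsilon$. A Meir--Keeler condition, unlike a Banach contraction, provides no quantitative decrease, so the ``first index at which the distance reaches $\varepsilon$'' setup yields no contradiction. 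The fix (and essentially what the paper does, in direct rather than contrapositive form) is to work at the threshold $\varepsilon+\delta(\varepsilon)$ with $\delta(\varepsilon)\leq\varepsilon$: one shows inductively that $d(x_{k+q},x_n)$ stays below $\varepsilon+\delta(\varepsilon)$, because a value in $[\varepsilon,\varepsilon+\delta(\varepsilon))$ is pushed strictly below $\varepsilon$ by \eqref{eq:01}, leaving room of size $\delta(\varepsilon)$ to absorb the small increments $d(x_n,x_{n+1})<\delta(\varepsilon)/N$. As written, your proof of the Cauchy property does not go through.
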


\begin{proof}
Define the sequence $\{x_{n}\}$ in $X$ by $x_{n+1}=Tx_{n}$ for all
$n\in\mathbb{N}$; equivalently, $x_{n}=T^{n}x_{0}$. Since $\alpha(x_{0}%
,Tx_{0})\geq1$, then by Lemma \ref{Lemma:2} we get
\begin{equation}
\alpha(x_{n},x_{n+1})\geq1\quad\text{for all }n\in\mathbb{N} \label{eq:04a}%
\end{equation}
and
\begin{equation}
d(x_{n},x_{n+1})\rightarrow0\quad\text{as}\quad n\rightarrow+\infty.
\label{eq:04}%
\end{equation}

Fix $\varepsilon>0$. Without any loss of generality, we may assume that
$\delta(\varepsilon)\leq\varepsilon$. Using (\ref{eq:04}), there exists $k$
such that
\begin{equation}
d(x_{n},x_{n+1})<\frac{\delta(\varepsilon)}{N}\quad\text{for all }n\geq
k\text{.} \label{eq:05}%
\end{equation}
We introduce the set $Y\subset X$ defined by
\[
Y:=\left\{  x\in X:\text{ there exists }q(x)\in\{0,1,\ldots,N-1\}~~\text{such
that}~~d(x_{k+q(x)},x)<\varepsilon+\delta(\varepsilon)\text{ and }%
\alpha(x_{k+q(x)},x)\geq1\right\}  .
\]

Fix $x\in Y$. Our first claim is that
\begin{equation}
T^{N}x\in Y\text{ and }q\left(  T^{N}x\right)  =q(x). \label{eq:05y}%
\end{equation}
For short, let $q:=q(x)$.

First, we prove that
\begin{equation}
d(x_{k+q},T^{N}x)<\varepsilon+\delta(\varepsilon). \label{eq:07}%
\end{equation}
Using the triangle inequality and (\ref{eq:05}), we obtain%
\[
d(x_{k+q},T^{N}x)\leq\sum_{i=0}^{N-1}d(x_{k+q+i},x_{k+q+i+1})+d(x_{k+q+N}%
,T^{N}x)<\delta(\varepsilon)+d(T^{N}x_{k+q},T^{N}x),
\]
while $\alpha(x_{k+q},x)\geq1$ leads to%
\[
d(T^{N}x_{k+q},T^{N}x)\leq d(Tx_{k+q},Tx)\leq d(x_{k+q},x)
\]
by Lemma \ref{Lemma:2}; hence, we conclude that%
\begin{equation}
d(x_{k+q},T^{N}x)<d(Tx_{k+q},Tx)+\delta(\varepsilon)\leq d(x_{k+q}%
,x)+\delta(\varepsilon). \label{eq:06}%
\end{equation}
Clearly, if $d(x_{k+q},x)<\varepsilon$, then (\ref{eq:06}) leads to
(\ref{eq:07}), so it is enough to consider the case when $\varepsilon\leq
d(x_{k+q},x)$. Then $x\in Y$ leads to $\varepsilon\leq d(x_{k+q}%
,x)<\varepsilon+\delta(\varepsilon)$. Using next that $T$ is an $\alpha
$--contractive mapping of Meir--Keeler type, we obtain that $\alpha
(x_{k+q},x)d(Tx_{k+q},Tx)<\varepsilon$, and since $\alpha(x_{k+q},x)\geq1$, we
arrive to
\begin{equation}
d(Tx_{k+q},Tx)<\varepsilon; \label{eq:08}%
\end{equation}
hence (\ref{eq:07}) follows again by (\ref{eq:06}) and (\ref{eq:08}).

Next, we prove that%
\begin{equation}
\alpha(x_{k+q},T^{N}x)\geq1. \label{eq:09}%
\end{equation}
Indeed,
\begin{equation}
\alpha(x_{k+q+i},x_{k+q+i+1})\geq1\text{\quad for all }i\in\{0,1,\ldots,N-1\}
\label{eq:10}%
\end{equation}
by (\ref{eq:04a}). Also, $\alpha(x_{k+q},x)\geq1$ leads by Lemma \ref{Lemma:2}
to
\begin{equation}
\alpha(x_{k+q+N},T^{N}x)\geq1. \label{eq:11}%
\end{equation}
Now, using (\ref{eq:10}), (\ref{eq:11}) and the $N$--transitivity of $\alpha$,
we finally get (\ref{eq:09}).

Concluding, our first claim (\ref{eq:05y}) is proven.

Our second claim is
\begin{equation}
x_{k+i+1}\in Y\text{ and }q(x_{k+i+1})=i\quad\text{for all }i\in
\{0,1,\ldots,N-1\}\text{.} \label{eq:12}%
\end{equation}
Indeed, $d(x_{k+i},x_{k+i+1})<\frac{\delta(\varepsilon)}{N}<\varepsilon
+\delta(\varepsilon)$ by (\ref{eq:05}), while $\alpha(x_{k+i},x_{k+i+1})\geq1$
by (\ref{eq:04a}), which proves (\ref{eq:12}).

Now, by (\ref{eq:05y}) and (\ref{eq:12}), we can easily conclude that
\begin{equation}
x_{n}\in Y\text{ and }q(x_{n})=(n-k-1)\operatorname{mod}N\quad\text{for all
}n\geq k+1. \label{eq:12a}%
\end{equation}

Finally, let $m,n\geq k+1$ and assume that $q(x_{n})\leq q(x_{m})$ without any
loss of generality. Then, by the triangle inequality, (\ref{eq:05}) and
(\ref{eq:12a}), it follows that
\begin{align*}
d(x_{n},x_{m})  &  \leq d(x_{n},x_{k+q(x_{n})})+\sum_{i=q(x_{n})}^{q(x_{m}%
)-1}d\left(  x_{k+i},x_{k+i+1}\right)  +d(x_{k+q(x_{m})},x_{m})\\
&  <2(\varepsilon+\delta(\varepsilon))+(q(x_{m})-q(x_{n}))\frac{\delta
(\varepsilon)}{N}\leq2(\varepsilon+\delta(\varepsilon))+\delta(\varepsilon
)\leq5\varepsilon.
\end{align*}

Concluding, $\{x_{n}\}$ is a Cauchy sequence in the complete metric space
$(X,d)$, hence convergent to some $x^{\ast}\in X$. Moreover, $\{x_{n}\}$ is a
$(T,\alpha)$--orbital sequence by (\ref{eq:04a}), hence, by (A3), there exists
a subsequence $\{x_{n(k)}\}$ of $\{x_{n}\}$ such that $Tx_{n(k)}\rightarrow
Tx^{\ast}$ as $k\rightarrow+\infty$. But $Tx_{n(k)}=x_{n(k)+1}\rightarrow
x^{\ast}$ as $k\rightarrow+\infty$, hence $Tx^{\ast}=x^{\ast}$ by the
uniqueness of the limit, which concludes the proof.
\end{proof}

In the next theorem, we replace the continuity of the mapping $T$ by a
regularity condition over the metric space $(X,d)$.

\begin{theorem}
\label{Th:2}In the conditions of Theorem \ref{Th:1}, if (A3) is replaced with:

\begin{enumerate}
\item[\textrm{(A4)}] $(X,d)$ is $(T,\alpha)$--regular,
\end{enumerate}

\noindent then the conclusion of Theorem \ref{Th:1} holds.
\end{theorem}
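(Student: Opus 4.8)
The plan is to follow the proof of Theorem~\ref{Th:1} almost verbatim up to the point where the limit $x^{\ast}$ is produced, and only to replace the final step in which (A3) was invoked. First I would define the sequence $\{x_{n}\}$ by $x_{n+1}=Tx_{n}$, so that $x_{n}=T^{n}x_{0}$. Exactly as in Theorem~\ref{Th:1}, conditions (A1) and (A2) together with Lemma~\ref{Lemma:2} give $\alpha(x_{n},x_{n+1})\geq1$ for all $n$ and $d(x_{n},x_{n+1})\rightarrow0$; and the whole Cauchy argument given there---which uses only the Meir--Keeler contractivity, the $N$--transitivity of $\alpha$, and these two facts, but never (A3)---shows that $\{x_{n}\}$ is Cauchy, hence convergent to some $x^{\ast}\in X$. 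Moreover, $\{x_{n}\}$ is a $(T,\alpha)$--orbital sequence by the relation $\alpha(x_{n},x_{n+1})\geq1$.

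The new ingredient is to use (A4) in place of (A3). Since $\{x_{n}\}$ is $(T,\alpha)$--orbital and $x_{n}\rightarrow x^{\ast}$, the $(T,\alpha)$--regularity of $(X,d)$ yields a subsequence $\{x_{n(k)}\}$ with $\alpha(x_{n(k)},x^{\ast})\geq1$ for all $k$. The goal is then to establish $Tx^{\ast}=x^{\ast}$, and for this I would estimate $d(x^{\ast},Tx^{\ast})$ via the triangle inequality, writing $x_{n(k)+1}=Tx_{n(k)}$:
\[
d(x^{\ast},Tx^{\ast})\leq d(x^{\ast},x_{n(k)+1})+d(Tx_{n(k)},Tx^{\ast}).
\]
The first term tends to $0$ because $x_{n(k)+1}\rightarrow x^{\ast}$. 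For the second term, the key observation is that $d(Tx_{n(k)},Tx^{\ast})\leq d(x_{n(k)},x^{\ast})$ for every $k$: when $x_{n(k)}\neq x^{\ast}$ this follows from $\alpha(x_{n(k)},x^{\ast})\geq1$ and Lemma~\ref{Lemma:1}, while when $x_{n(k)}=x^{\ast}$ both sides equal $0$. Since $x_{n(k)}\rightarrow x^{\ast}$, the right-hand side tends to $0$, forcing $d(Tx_{n(k)},Tx^{\ast})\rightarrow0$. Letting $k\rightarrow+\infty$ then gives $d(x^{\ast},Tx^{\ast})=0$, i.e., $Tx^{\ast}=x^{\ast}$.

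I expect the main obstacle to be entirely concentrated in controlling the second term $d(Tx_{n(k)},Tx^{\ast})$: one can no longer appeal to the continuity of $T$ (which is precisely what was relinquished in passing from (A3) to (A4)), so the contractive estimate of Lemma~\ref{Lemma:1} must be leveraged to force this quantity to vanish, with the minor case distinction $x_{n(k)}=x^{\ast}$ treated separately so that the inequality $d(Tx_{n(k)},Tx^{\ast})\leq d(x_{n(k)},x^{\ast})$ holds uniformly in $k$. Everything preceding the convergence of $\{x_{n}\}$ is a direct transcription of the earlier argument and requires no modification.
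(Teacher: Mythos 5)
Your proposal is correct and follows essentially the same route as the paper: reuse the Cauchy argument from Theorem~\ref{Th:1} (which indeed never invokes (A3)), extract via (A4) a subsequence with $\alpha(x_{n(k)},x^{\ast})\geq1$, and use Lemma~\ref{Lemma:1} to get $d(Tx_{n(k)},Tx^{\ast})\leq d(x_{n(k)},x^{\ast})\rightarrow0$, handling the case $x_{n(k)}=x^{\ast}$ separately. The only cosmetic difference is that you conclude via an explicit triangle-inequality estimate of $d(x^{\ast},Tx^{\ast})$, whereas the paper concludes by uniqueness of the limit of $\{x_{n(k)+1}\}$; these are the same argument.
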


\begin{proof}
Following the proof of Theorem \ref{Th:1}, we only have to prove that
$x^{\ast}$ is a fixed point of $T$. Since $\{x_{n}\}$ is a $(T,\alpha
)$--orbital sequence, then, by (A4), there exists a subsequence $\{x_{n(k)}\}$
of $\{x_{n}\}$ such that
\[
\alpha(x_{n(k)},x^{\ast})\geq1\quad\text{for all}\quad k\in\mathbb{N}.
\]
Next, using Lemma \ref{Lemma:1}, we get
\[
d(Tx_{n(k)},Tx^{\ast})\leq\alpha(x_{n(k)},x^{\ast})d(Tx_{n(k)},Tx^{\ast})\leq
d(x_{n(k)},x^{\ast})\quad\text{for all}\quad k\in\mathbb{N}%
\]
(with equality when $x_{n(k)}=x^{\ast}$). As $x_{n(k)}\rightarrow x^{\ast}$,
we obtain that $x_{n(k)+1}=Tx_{n(k)}\rightarrow Tx^{\ast}$. As $\{x_{n(k)+1}%
\}$ is a subsequence of $\{x_{n}\}$ and $x_{n}\rightarrow x^{\ast}$ we have
$x_{n(k)+1}\rightarrow x^{\ast}$. Now, the uniqueness of the limit gives us
$Tx^{\ast}=x^{\ast}$ and the proof is complete.
\end{proof}

To assure the uniqueness of the fixed point, we will consider the following
additional assumption.

\begin{enumerate}
\item[\textrm{(A5)}] $X$ is $\alpha$--connected.
\end{enumerate}

This is the purpose of the next theorem.

\begin{theorem}
\label{Th:3}If adding (A5) to the hypotheses of Theorem \ref{Th:1} (or Theorem
\ref{Th:2}), then $x^{\ast}$ is the unique fixed point of $T$ and
$T^{n}(x)\rightarrow x^{\ast}$ (as $n\rightarrow\infty$) for every $x\in X$.
\end{theorem}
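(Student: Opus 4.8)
The plan is to derive both uniqueness and global convergence from the existence of the fixed point $x^{\ast}$ (granted by Theorem \ref{Th:1} or Theorem \ref{Th:2}) together with the $\alpha$--connectedness assumption (A5), using Lemma \ref{Lemma:2} as the main engine. The central observation is that Lemma \ref{Lemma:2} converts a single inequality $\alpha(u,v)\geq1$ into the conclusion $d(T^{n}u,T^{n}v)\rightarrow0$, and that, since $d$ is symmetric, this conclusion is insensitive to the order of the arguments. This allows me to propagate the convergence along an arbitrary $\alpha$--chain, where at each link only one of the two orientations of $\alpha$ is known to be $\geq1$.

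First I would establish the global convergence statement $T^{n}x\rightarrow x^{\ast}$ for every $x\in X$. For $x=x^{\ast}$ this is immediate, since $x^{\ast}$ is fixed. For $x\neq x^{\ast}$, I invoke (A5) to obtain an $\alpha$--chain $\zeta=(z_{0},z_{1},\ldots,z_{m})$ from $x$ to $x^{\ast}$, so that $z_{0}=x$, $z_{m}=x^{\ast}$, and for each $i\in\{1,\ldots,m\}$ at least one of $\alpha(z_{i-1},z_{i})\geq1$ or $\alpha(z_{i},z_{i-1})\geq1$ holds. Applying Lemma \ref{Lemma:2} to whichever ordered pair satisfies the inequality, and using $d(T^{n}z_{i-1},T^{n}z_{i})=d(T^{n}z_{i},T^{n}z_{i-1})$, I get $d(T^{n}z_{i-1},T^{n}z_{i})\rightarrow0$ for each link $i$. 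The triangle inequality then yields
\[
d(T^{n}x,x^{\ast})=d(T^{n}z_{0},T^{n}z_{m})\leq\sum_{i=1}^{m}d(T^{n}z_{i-1},T^{n}z_{i})\longrightarrow0,
\]
where I have used $T^{n}z_{m}=T^{n}x^{\ast}=x^{\ast}$. Hence $T^{n}x\rightarrow x^{\ast}$.

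Uniqueness then follows with essentially no extra work. If $y^{\ast}$ is any fixed point of $T$, then $T^{n}y^{\ast}=y^{\ast}$ for all $n$, while the global convergence just proved gives $T^{n}y^{\ast}\rightarrow x^{\ast}$; by uniqueness of limits, $y^{\ast}=x^{\ast}$. (Alternatively, one may argue directly: for two distinct fixed points $x^{\ast}\neq y^{\ast}$, the chain argument forces $d(T^{n}x^{\ast},T^{n}y^{\ast})\rightarrow0$, contradicting $d(T^{n}x^{\ast},T^{n}y^{\ast})=d(x^{\ast},y^{\ast})>0$.)

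The only delicate point is the one flagged at the outset: an $\alpha$--chain need not be directed, so Lemma \ref{Lemma:2} cannot be applied in a fixed orientation at every link. I expect this to be the main (and essentially only) obstacle, and it is resolved cleanly by the symmetry of the metric, which makes the per-link conclusion $d(T^{n}z_{i-1},T^{n}z_{i})\rightarrow0$ orientation-free. Degenerate links with $z_{i-1}=z_{i}$ are harmless, since then the corresponding distance vanishes identically.
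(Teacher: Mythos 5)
Your proof is correct and follows essentially the same route as the paper: obtain an $\alpha$--chain via (A5), apply Lemma \ref{Lemma:2} to whichever orientation of each link satisfies $\alpha\geq1$, use the symmetry of $d$ to make the per-link conclusion orientation-free, propagate $T^{n}z_{i}\rightarrow x^{\ast}$ along the chain starting from the fixed end, and deduce uniqueness from the global convergence. The only difference is cosmetic: you make the triangle-inequality summation explicit where the paper leaves it implicit.
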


\begin{proof}
Let $x\in X\setminus\{x^{\ast}\}$. By (A5), there exists $(x^{\ast}%
=z_{0},z_{1},\ldots,z_{n}=x)$ an $\alpha$--chain from $x^{\ast}$ to $x$.
Since
\[
\alpha(z_{i-1},z_{i})\geq1\text{ or }\alpha(z_{i},z_{i-1})\geq1\quad\text{for
all }i\in\{1,2,\ldots,n\},
\]
it follows by Lemma \ref{Lemma:2} and the symmetry of $d$, that
\begin{equation}
d(T^{n}(z_{i-1}),T^{n}(z_{i}))\rightarrow0\text{ (as }n\rightarrow
+\infty\text{)\quad for all }i\in\{1,2,\ldots,n\}\text{.} \label{eq:13}%
\end{equation}
Now, since $z_{0}=x^{\ast}$ is a fixed point of $T$, it follows that
$T^{n}(z_{0})=x^{\ast}$ for all $n$, which finally leads to
\[
T^{n}z_{i}\rightarrow x^{\ast}\text{ (as }n\rightarrow+\infty\text{)\quad for
all }i\in\{1,2,\ldots,n\},
\]
using (\ref{eq:13}); hence, $T^{n}x\rightarrow x^{\ast}$ (as $n\rightarrow
+\infty$). In particular, if $x$ is another fixed point of $T$, it follows
that $x=x^{\ast}$ which is a contradiction, and the proof is concluded.
\end{proof}


\section{Some corollaries\label{Sec:3}}

In this section, we will derive some corollaries from our previous theorems.

\subsection{Coupled fixed point theorems for bivariate $\alpha$--contractive
mappings of Meir--Keeler type on complete metric spaces}

The theorems obtained in the previous section allow us to derive some coupled
fixed point results in complete metric spaces. First, let us recall the
following definitions.

\begin{definition}
[\cite{Bhaskar2006}]Let $X$ be a nonempty set and $F:X\times X\rightarrow X$
be a given mapping. A pair $(x,y)\in X\times X$ is called a coupled fixed
point of $F$ if $F(x,y)=x$ and $F(y,x)=y$.

Also, $x\in X$ is called a fixed point of $F$ if $(x,x)$ is a coupled fixed
point, i.e., $F(x,x)=x$.
\end{definition}

\begin{definition}
[\cite{Rus2011}]Let $X$ be a nonempty set, and $F,G:X\times X\rightarrow X$.
The symmetric composition (or, the $s$\emph{-composition} for short) of $A$
and $B$ is defined by%
\[
G\ast F:X\times X\rightarrow X,\quad(G\ast F)(x,y)=G(F(x,y),F(y,x))\quad
(x,y\in X).
\]

\end{definition}

\begin{remark}
[\cite{Rus2011}]The $s$-composition is an associative law. Also, the
projection mapping%
\[
P_{X}:X\times X\rightarrow X,\quad P(x,y)=x\quad(x,y\in X)
\]
is the identity element with respect to the $s$-composition (i.e., $F\ast
P_{X}=P_{X}\ast F=F$ for all $F:X\times X\rightarrow X$). Consequently, for
any $F:X\times X\rightarrow X$ one can define the functional powers (i.e., the
iterates) of $F$ with respect to the $s$-composition by%
\[
F^{n+1}=F\ast F^{n}=F^{n}\ast F\quad(n\in\mathbb{N}),\quad F^{0}=P_{X}\text{.}%
\]

\end{remark}

We have the following result.

\begin{corollary}
\label{Cor:1}Let $(X,d)$ be a complete metric space, $\alpha:(X\times
X)\times(X\times X)\rightarrow\lbrack0,+\infty)$ a $N$--transitive mapping on
$X\times X$ for some $N\in\mathbb{N}\setminus\{0\}$, and $F:X\times
X\rightarrow X$ such that for every $\varepsilon>0$ there exists
$\delta(\varepsilon)>0$ for which:
\begin{equation}
(x,y),(u,v)\in X\times X:\varepsilon\leq\frac{d(x,u)+d(y,v)}{2}<\varepsilon
+\delta(\varepsilon)\Rightarrow\alpha((x,y),(u,v))d(F(x,y),F(u,v))<\varepsilon
. \label{eq:14}%
\end{equation}

Suppose that

\begin{enumerate}
\item[\textrm{(B1)}] for all $(x,y),(u,v)\in X\times X$,
\[
\alpha((x,y),(u,v))\geq1\Longrightarrow\alpha
((F(x,y),F(y,x)),(F(u,v),F(v,u)))\geq1;
\]

\item[\textrm{(B2)}] there exists $(x_{0},y_{0})\in X\times X$ such that
\[
\alpha\left(  (x_{0},y_{0}),(F(x_{0},y_{0}),F(y_{0},x_{0}))\right)  \geq
1\quad\text{and}\quad\alpha\left(  (F(y_{0},x_{0}),F(x_{0},y_{0}%
)),(y_{0},x_{0})\right)  \geq1;
\]

\item[\textrm{(B3)}] $F$ is continuous.
\end{enumerate}

Then $F$ has a coupled fixed point, that is, there exists $(x^{\ast},y^{\ast
})\in X\times X$ such that $x^{\ast}=F(x^{\ast},y^{\ast})$ and $y^{\ast
}=F(y^{\ast},x^{\ast})$.
\end{corollary}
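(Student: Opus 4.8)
The plan is to reduce the coupled problem to an ordinary fixed point problem on the product space and then invoke Theorem~\ref{Th:1}. Set $Y:=X\times X$ and equip it with the metric
\[
\rho((x,y),(u,v)):=\tfrac12\bigl(d(x,u)+d(y,v)\bigr),
\]
which makes $(Y,\rho)$ a complete metric space. Define $T:Y\to Y$ by $T(x,y)=(F(x,y),F(y,x))$, so that $z^{\ast}=(x^{\ast},y^{\ast})$ is a fixed point of $T$ exactly when $(x^{\ast},y^{\ast})$ is a coupled fixed point of $F$. Writing $\sigma(x,y):=(y,x)$ for the swap, I note the two structural facts $T\sigma=\sigma T$ and $\rho(\sigma z,\sigma w)=\rho(z,w)$, which will be used throughout.

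Next I would translate the hypotheses into the language of Theorem~\ref{Th:1}. Condition (B1) is precisely the $\alpha$--admissibility of $T$ on $(Y,\rho)$, giving (A1). Since $F$ is continuous by (B3), the map $T$ is continuous, hence $\alpha$--orbitally continuous, giving (A3); the $N$--transitivity of $\alpha$ on $Y$ is assumed outright. Taking $z_0:=(x_0,y_0)$, the first half of (B2) reads $\alpha(z_0,Tz_0)\ge1$, which is (A2). Thus, modulo the contractive condition, all the hypotheses of Theorem~\ref{Th:1} are in place for $T$ on $(Y,\rho)$.

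The delicate step, and the one I expect to be the main obstacle, is the contraction. For $z=(x,y)$, $w=(u,v)$ one has
\[
\alpha(z,w)\,\rho(Tz,Tw)=\tfrac12\bigl(\alpha(z,w)d(F(x,y),F(u,v))+\alpha(z,w)d(F(y,x),F(v,u))\bigr).
\]
Inequality (\ref{eq:14}) controls the first summand directly, but the second summand is governed by (\ref{eq:14}) applied to the swapped pair $(\sigma z,\sigma w)$, which introduces $\alpha(\sigma z,\sigma w)$ rather than $\alpha(z,w)$. Since no symmetry of $\alpha$ under $\sigma$ is assumed, $T$ need not be an $\alpha$--contractive mapping of Meir--Keeler type on all of $Y$, so Theorem~\ref{Th:1} cannot be quoted verbatim. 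This is exactly where the \emph{second} half of (B2) enters: setting $w_n:=\sigma z_n$ with $z_n:=T^nz_0$, the second condition gives $\alpha(w_1,w_0)\ge1$, and (B1) then propagates the \emph{backward} chain $\alpha(w_{n+1},w_n)\ge1$ along the swapped orbit, while the first condition propagates the forward chain $\alpha(z_n,z_{n+1})\ge1$. Consequently, along the orbit both coordinate distances are simultaneously controllable: applying (\ref{eq:14}) to $(z_n,z_{n+1})$ and to $(w_{n+1},w_n)$ (and using $\rho$--invariance under $\sigma$) yields, as in Lemma~\ref{Lemma:2}, that $\rho(z_n,z_{n+1})$ is nonincreasing and tends to $0$.

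Finally, I would re-run the Cauchy estimate of the proof of Theorem~\ref{Th:1}, but carried out along the concrete orbit $\{z_n\}$: the auxiliary set $S$ and the claims $T^Nz\in S$ and $q(T^Nx)=q(x)$ are only ever needed at orbit points $z_m$, for which $\sigma z_m=w_m$, so the swapped $\alpha$--conditions required to bound the second coordinate are supplied by the backward chain together with the $N$--transitivity of $\alpha$. This produces the same bound $\rho(z_n,z_m)<5\varepsilon$ and shows $\{z_n\}$ is Cauchy, hence convergent to some $z^{\ast}=(x^{\ast},y^{\ast})\in Y$. Since $\{z_n\}$ is $(T,\alpha)$--orbital, $\alpha$--orbital continuity gives $Tz^{\ast}=z^{\ast}$, i.e.\ $x^{\ast}=F(x^{\ast},y^{\ast})$ and $y^{\ast}=F(y^{\ast},x^{\ast})$, which is the desired coupled fixed point. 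The crux of the whole argument is the swap--bookkeeping in the previous paragraph; everything else is a transcription of the proofs of Lemma~\ref{Lemma:2} and Theorem~\ref{Th:1} to the product space.
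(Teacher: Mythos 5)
Your reduction is correct, and its skeleton --- the product space $X\times X$ with the metric $\rho=\tfrac12(d+d)$ and the map $T(x,y)=(F(x,y),F(y,x))$ --- is exactly the paper's. You also correctly isolated the one genuine obstacle, namely that $\alpha$ need not be symmetric under the swap $\sigma(x,y)=(y,x)$, so $T$ is not an $\alpha$--contractive mapping of Meir--Keeler type for $\alpha$ itself, and you correctly saw that the second half of (B2) exists precisely to launch a backward $\alpha$--chain along the swapped orbit. Where you diverge is in the execution: you propose to re-run the proofs of Lemma \ref{Lemma:2} and Theorem \ref{Th:1} by hand along the concrete orbit, carrying the forward chain $\alpha(z_n,z_{n+1})\geq1$ and the backward chain $\alpha(\sigma z_{n+1},\sigma z_n)\geq1$ as two separate pieces of bookkeeping. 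The paper instead packages the two chains into a single auxiliary function $\beta(z,w):=\min\{\alpha(z,w),\alpha(\sigma w,\sigma z)\}$, checks that $\beta$ inherits $N$--transitivity from $\alpha$, that $T$ is $\beta$--admissible by two applications of (B1), that (B2) says exactly $\beta(z_0,Tz_0)\geq1$, and --- using the invariance of $\rho$ under $\sigma$ --- that (\ref{eq:14}) makes $T$ a $\beta$--contractive mapping of Meir--Keeler type; Theorem \ref{Th:1} then applies verbatim with $\beta$ in place of $\alpha$. So your assertion that the theorem ``cannot be quoted verbatim'' is true only if one insists on keeping $\alpha$ itself. Both routes prove the corollary; the paper's symmetrization buys a black-box application of Theorems \ref{Th:1}--\ref{Th:3} (hence Corollaries \ref{Cor:2} and \ref{Cor:3} come for free), whereas your route additionally requires verifying that every use of the auxiliary set in the proof of Theorem \ref{Th:1} occurs only at orbit points --- which is true, but is an extra check you should make explicit rather than assert.
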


\begin{proof}
Consider
\[
D\left(  (x,y),(u,v)\right)  :=\frac{1}{2}\left(  d(x,u)+d(y,v)\right)
\quad\text{for all }(x,y),(u,v)\in X\times X.
\]
Then, clearly, $(X\times X,D)$ is a complete metric space. Also, let
$T:X\times X\rightarrow X\times X$ be defined by%
\[
T(x,y)=(F(x,y),F(y,x))\quad\text{for all }(x,y)\in X\times X
\]
and $\beta:(X\times X)\times(X\times X)\rightarrow\lbrack0,+\infty)$ be given
by
\begin{equation}
\beta((x,y),(u,v))=\min\left\{  \alpha((x,y),(u,v)),\alpha
((v,u),(y,x))\right\}  \quad\text{for all }(x,y),(u,v)\in X\times X.
\label{eq:15}%
\end{equation}

First, we prove that $\beta$ is $N$-transitive. Let $(x_{i},y_{i})\in X\times
X$ ($i\in\{0,1,\ldots,N+1\}$) such that $\beta\left(  (x_{i},y_{i}%
),(x_{i+1},y_{i+1})\right)  \geq1$ for all $i\in\{0,1,\ldots,N\}$. By the
definition of $\beta$, it follows that
\[
\alpha\left(  (x_{i},y_{i}),(x_{i+1},y_{i+1})\right)  \geq1\text{ and }%
\alpha\left(  (y_{i+1},x_{i+1}),(y_{i},x_{i})\right)  \geq1\text{ \quad for
all }i\in\{0,1,\ldots,N\},
\]
hence, by the $N$--transitivity of $\alpha$, we have that
\[
\alpha\left(  (x_{0},y_{0}),(x_{N+1},y_{N+1})\right)  \geq1\text{ and }%
\alpha\left(  (y_{N+1},x_{N+1}),(y_{0},x_{0})\right)  \geq1\text{,}%
\]
which concludes our argument.

We claim next that $T$ is a $\beta$--contractive mapping of Meir--Keeler type
(with respect to $D$). Indeed, let $\varepsilon>0$ and let $\delta
(\varepsilon)>0$ for which (\ref{eq:14}) is satisfied. If $(x,y),(u,v)\in
X\times X$ are such that $\varepsilon\leq D\left(  (x,y),(u,v)\right)
<\varepsilon+\delta(\varepsilon)$, then also $\varepsilon\leq D\left(
(v,u),(y,x)\right)  <\varepsilon+\delta(\varepsilon)$ by the definition of
$D$, hence%
\[%
\begin{array}
[c]{c}%
\alpha((x,y),(u,v))d(F(x,y),F(u,v))<\varepsilon\\
\alpha((v,u),(y,x))d(F(v,u),F(y,x))<\varepsilon
\end{array}
\]
by (\ref{eq:14}). These two inequalities lead straight to
\[
\beta((x,y),(u,v))D\left(  T(x,y),T(u,v)\right)  <\varepsilon,
\]
which proves our claim.

Next, it is easy to check that $T$ is $\beta$--admissible by (B1). Moreover,
(B2) ensures that $\beta((x_{0},y_{0}),T(x_{0},y_{0}))\geq1$, while (B3)
ensures that $T$ is continuous, hence $\beta$--orbitally continuous.

Concluding, all the hypotheses of Theorem \ref{Th:1} applied to the metric
space $(X\times X,D)$, the mapping $T$ and the function $\beta$ are satisfied,
hence $T$ has a fixed point $(x^{\ast},y^{\ast})\in X\times X$, meaning that
$(x^{\ast},y^{\ast})$ is a coupled fixed point of $F$. The proof is now complete.
\end{proof}

\begin{corollary}
\label{Cor:2}In the conditions of Corollary \ref{Cor:1}, if (B3) is replaced with:

\begin{enumerate}
\item[\textrm{(B4)}] for every sequence $\{(x_{n},y_{n})\}$ in $X\times X$
such that $x_{n}\rightarrow x\in X$, $y_{n}\rightarrow y\in X$ as
$n\rightarrow+\infty$, and%
\[
\alpha((x_{n},y_{n}),(x_{n+1},y_{n+1}))\geq1,\quad\alpha((y_{n+1}%
,x_{n+1}),(y_{n},x_{n}))\geq1\quad\text{for all }n\in\mathbb{N},
\]
there exists a subsequence $\{(x_{n(k)},y_{n(k)})\}$ such that%
\[
\alpha((x_{n(k)},y_{n(k)}),(x,y))\geq1,\quad\alpha((y,x),(y_{n(k)}%
,x_{n(k)}))\geq1\quad\text{for all }k\in\mathbb{N};
\]

\end{enumerate}

\noindent then the conclusion of Corollary \ref{Cor:1} holds.
\end{corollary}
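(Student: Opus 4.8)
The plan is to re-use, essentially verbatim, the entire reduction carried out in the proof of Corollary \ref{Cor:1}: introduce the same product metric $D\left((x,y),(u,v)\right)=\frac12\left(d(x,u)+d(y,v)\right)$ on $X\times X$, the same mapping $T(x,y)=(F(x,y),F(y,x))$, and the same auxiliary function $\beta$ defined by (\ref{eq:15}). Every property established there transfers without change: $(X\times X,D)$ is complete, $\beta$ is $N$--transitive, $T$ is a $\beta$--contractive mapping of Meir--Keeler type with respect to $D$, $T$ is $\beta$--admissible by (B1), and $\beta\left((x_0,y_0),T(x_0,y_0)\right)\geq1$ by (B2). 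The only place the proof of Corollary \ref{Cor:1} used (B3) was to guarantee $\beta$--orbital continuity of $T$ and thereby invoke Theorem \ref{Th:1}. Here, instead, I would verify that $(X\times X,D)$ is $(T,\beta)$--regular and then conclude by applying Theorem \ref{Th:2} in place of Theorem \ref{Th:1}.

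The key step is precisely this verification of $(T,\beta)$--regularity from (B4). Let $\{(x_n,y_n)\}$ be a $(T,\beta)$--orbital sequence in $X\times X$ converging to some $(x,y)\in X\times X$; since $D$ is the average of $d$ in each coordinate, convergence in $(X\times X,D)$ is equivalent to $x_n\rightarrow x$ and $y_n\rightarrow y$ in $X$. Being $(T,\beta)$--orbital means $\beta\left((x_n,y_n),(x_{n+1},y_{n+1})\right)\geq1$ for all $n$, which, unpacking the minimum in (\ref{eq:15}), is exactly equivalent to the two conditions
\[
\alpha\left((x_n,y_n),(x_{n+1},y_{n+1})\right)\geq1,\quad \alpha\left((y_{n+1},x_{n+1}),(y_n,x_n)\right)\geq1\quad(n\in\mathbb{N}).
\]
These are precisely the hypotheses of (B4). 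Hence (B4) supplies a subsequence $\{(x_{n(k)},y_{n(k)})\}$ with $\alpha\left((x_{n(k)},y_{n(k)}),(x,y)\right)\geq1$ and $\alpha\left((y,x),(y_{n(k)},x_{n(k)})\right)\geq1$ for all $k$, and recombining through the definition of $\beta$ gives $\beta\left((x_{n(k)},y_{n(k)}),(x,y)\right)\geq1$ for all $k$, which is exactly the $(T,\beta)$--regularity of $(X\times X,D)$.

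The main point requiring care is the bookkeeping of the swapped and reversed arguments in the definition of $\beta$, to confirm that the two $\alpha$--inequalities extracted from $\beta\geq1$ align exactly (in both positions and order) with the assumptions and with the conclusion of (B4); once this matching is checked, there is no genuine obstacle. With $(T,\beta)$--regularity in hand, all hypotheses of Theorem \ref{Th:2} hold for $(X\times X,D)$, $T$ and $\beta$, so $T$ has a fixed point $(x^{\ast},y^{\ast})$, which is a coupled fixed point of $F$, completing the proof.
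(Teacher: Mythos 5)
Your proposal is correct and follows essentially the same route as the paper: reuse the $(X\times X,D)$, $T$, $\beta$ construction from Corollary \ref{Cor:1}, observe that (B4) unpacks precisely to the regularity of $(X\times X,D)$ with respect to $\beta$, and invoke Theorem \ref{Th:2} in place of Theorem \ref{Th:1}. The bookkeeping of the swapped arguments in $\beta$ is exactly right, so nothing is missing.
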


\begin{proof}
Using the notations in the proof of Corollary \ref{Cor:1}, it easily follows
by (B4) that $(X\times X,D)$ is $\beta$--regular, hence $(T,\beta)$--regular.
By following the proof of Corollary \ref{Cor:1}, the conclusion follows by
Theorem \ref{Th:2} applied to the metric space $(X\times X,D)$, the mapping
$T$ and the function $\beta$.
\end{proof}

For the uniqueness of the coupled fixed point, we consider the following assumption.

\begin{enumerate}
\item[\textrm{(B5)}] $X\times X$ is $\beta$--connected, where $\beta$ is
defined by (\ref{eq:15}).
\end{enumerate}

\begin{corollary}
\label{Cor:3} If adding condition (B5) to the hypotheses of Corollary
\ref{Cor:1} (or Corollary \ref{Cor:2}) then $x^{\ast}=y^{\ast}$, $(x^{\ast
},x^{\ast})$ is the unique coupled fixed point of $F$ and $x^{\ast}$ is the
unique fixed point of $F$. Moreover, $F^{n}(x,y)\rightarrow x^{\ast}$ as
$n\rightarrow\infty$ for all $x,y\in X$.
\end{corollary}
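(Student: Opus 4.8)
The plan is to reduce everything to Theorem \ref{Th:3} applied in the product setting already set up in the proofs of Corollaries \ref{Cor:1} and \ref{Cor:2}. Recall from there that $(X\times X,D)$ is a complete metric space, that $T(x,y)=(F(x,y),F(y,x))$, and that $\beta$ is given by (\ref{eq:15}); moreover, all the hypotheses of Theorem \ref{Th:1} (or Theorem \ref{Th:2}) were verified for the triple $(X\times X,D)$, $T$, $\beta$, including the $N$--transitivity of $\beta$. Since the new assumption (B5) is precisely the statement that $X\times X$ is $\beta$--connected, i.e., condition (A5) in the product setting, Theorem \ref{Th:3} applies and yields that $T$ has a \emph{unique} fixed point, necessarily the pair $(x^{\ast},y^{\ast})$ produced by Corollary \ref{Cor:1}, together with the iterative convergence $T^{n}(x,y)\rightarrow(x^{\ast},y^{\ast})$ for every $(x,y)\in X\times X$.

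First I would record the relation between the iterates of $T$ and the $s$--powers of $F$, namely
\[
T^{n}(x,y)=(F^{n}(x,y),F^{n}(y,x))\quad(x,y\in X,\ n\in\mathbb{N}),
\]
which follows by an easy induction from the definition of the $s$--composition: for $n=1$ it is the definition of $T$, and the inductive step uses $F^{n+1}=F\ast F^{n}$ to identify each component. This immediately translates the convergence of $\{T^{n}(x,y)\}$ into $F^{n}(x,y)\rightarrow x^{\ast}$ and $F^{n}(y,x)\rightarrow y^{\ast}$ for all $x,y\in X$, and it identifies the fixed points of $T$ with the coupled fixed points of $F$: a pair $(a,b)$ satisfies $T(a,b)=(a,b)$ exactly when $F(a,b)=a$ and $F(b,a)=b$. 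Hence $(x^{\ast},y^{\ast})$ is the unique coupled fixed point of $F$.

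The key remaining step---and the only one requiring an idea rather than bookkeeping---is to show $x^{\ast}=y^{\ast}$. For this I would exploit the symmetry of the coupled fixed point equations: since $F(x^{\ast},y^{\ast})=x^{\ast}$ and $F(y^{\ast},x^{\ast})=y^{\ast}$, the swapped pair $(y^{\ast},x^{\ast})$ also satisfies $F(y^{\ast},x^{\ast})=y^{\ast}$ and $F(x^{\ast},y^{\ast})=x^{\ast}$, so it is \emph{also} a coupled fixed point of $F$. By the uniqueness just obtained, $(y^{\ast},x^{\ast})=(x^{\ast},y^{\ast})$, whence $x^{\ast}=y^{\ast}$.

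With $x^{\ast}=y^{\ast}$ in hand, the rest is immediate: $(x^{\ast},x^{\ast})$ is the unique coupled fixed point, and $F(x^{\ast},x^{\ast})=x^{\ast}$ shows $x^{\ast}$ is a fixed point of $F$; conversely any fixed point $a$ of $F$ yields a coupled fixed point $(a,a)$, forcing $a=x^{\ast}$ by uniqueness. Finally, the convergence $T^{n}(x,y)\rightarrow(x^{\ast},x^{\ast})$ gives $F^{n}(x,y)\rightarrow x^{\ast}$ for all $x,y\in X$, completing the argument. I do not anticipate any genuine obstacle here; the only point deserving care is the verification of the iterate identity $T^{n}(x,y)=(F^{n}(x,y),F^{n}(y,x))$, since it is precisely what makes the reduction to Theorem \ref{Th:3} and the transfer of the conclusions back to $F$ legitimate.
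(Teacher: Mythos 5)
Your proposal is correct and follows essentially the same route as the paper: apply Theorem \ref{Th:3} to $(X\times X,D)$, $T$, $\beta$ to get uniqueness of the fixed point of $T$, observe that the swapped pair $(y^{\ast},x^{\ast})$ is also a coupled fixed point to conclude $x^{\ast}=y^{\ast}$, and use the identity $T^{n}(x,y)=(F^{n}(x,y),F^{n}(y,x))$ to transfer the iterative convergence back to $F$. The only difference is that you spell out the induction behind that identity, which the paper states without proof.
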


\begin{proof}
We use the notations in the proof of Corollary \ref{Cor:1}. Then, by Theorem
\ref{Th:3}, it follows that $(x^{\ast},y^{\ast})$ is the unique fixed point of
$T$, hence the unique coupled fixed point of $F$. Since $(y^{\ast},x^{\ast})$
is also a coupled fixed point of $F$, then $(x^{\ast},y^{\ast})=(y^{\ast
},x^{\ast})$, hence $x^{\ast}=y^{\ast}$, meaning also that $x^{\ast}$ is the
unique fixed point of $F$. Since $T^{n}(x,y)=\left(  F^{n}(x,y),F^{n}%
(y,x)\right)  $ for all $n\in\mathbb{N}$ and $x,y\in X$, the proof is complete.
\end{proof}

We conclude this subsection with a particular form of the above corollaries,
when $\alpha$ is represented as:%
\begin{equation}
\alpha\left(  (x,y),(u,v)\right)  =\min\left\{  \alpha_{0}(x,u),\alpha
_{0}(v,y)\right\}  \quad\left(  (x,y),(u,v)\in X\times X\right)  \text{,}
\label{eq:16}%
\end{equation}
where $\alpha_{0}:X\times X\rightarrow\lbrack0,+\infty)$. Note that, in this
case, $\beta=\alpha$. We subsume the conclusions of Corollaries \ref{Cor:1},
\ref{Cor:2} and \ref{Cor:3} in one single result, as follows:

\begin{corollary}
\label{Cor:4}Let $(X,d)$ be a complete metric space, $\alpha_{0}:X\times
X\rightarrow\lbrack0,+\infty)$ a $N$--transitive mapping on $X\times X$ for
some $N\in\mathbb{N}\setminus\{0\}$, and $F:X\times X\rightarrow X$ such that
for every $\varepsilon>0$ there exists $\delta(\varepsilon)>0$ for which:
\[
(x,y),(u,v)\in X\times X:\varepsilon\leq\frac{d(x,u)+d(y,v)}{2}<\varepsilon
+\delta(\varepsilon)\Rightarrow\min\left\{  \alpha_{0}(x,u),\alpha
_{0}(v,y)\right\}  d(F(x,y),F(u,v))<\varepsilon.
\]

Suppose that

\begin{enumerate}
\item[\textrm{(C1)}] for all $(x,y),(u,v)\in X\times X$,
\[
\alpha_{0}(x,u)\geq1,~\alpha_{0}(v,y)\geq1\Longrightarrow\alpha_{0}\left(
F(x,y),F(u,v)\right)  \geq1;
\]

\item[\textrm{(C2)}] there exists $(x_{0},y_{0})\in X\times X$ such that
\[
\alpha_{0}\left(  x_{0},F(x_{0},y_{0})\right)  \geq1,\quad\alpha_{0}\left(
F(y_{0},x_{0}),y_{0}\right)  \geq1.
\]
If either

\item[\textrm{(C3)}] $F$ is continuous,
\end{enumerate}

\noindent or

\begin{enumerate}
\item[\textrm{(C4)}] for every sequence $\{(x_{n},y_{n})\}$ in $X\times X$
such that $x_{n}\rightarrow x\in X$, $y_{n}\rightarrow y\in X$ as
$n\rightarrow+\infty$, and%
\[
\alpha_{0}(x_{n},x_{n+1})\geq1,\quad\alpha_{0}(y_{n+1},y_{n})\geq
1\quad\text{for all }n\in\mathbb{N},
\]
there exists a subsequence $\{(x_{n(k)},y_{n(k)})\}$ such that%
\[
\alpha_{0}\left(  x_{n(k)},x\right)  \geq1,\quad\alpha_{0}(y,y_{n(k)}%
)\geq1\quad\text{for all}\ k\in\mathbb{N};
\]

\end{enumerate}

\noindent then $F$ has a coupled fixed point, that is, there exists $(x^{\ast
},y^{\ast})\in X\times X$ such that $x^{\ast}=F(x^{\ast},y^{\ast})$ and
$y^{\ast}=F(y^{\ast},x^{\ast})$.

Additionally, if

\begin{enumerate}
\item[\textrm{(C5)}] $X$ is $\alpha_{0}$--connected,
\end{enumerate}

\noindent then $x^{\ast}=y^{\ast}$, $(x^{\ast},x^{\ast})$ is the unique
coupled fixed point of $F$, $x^{\ast}$ is the unique fixed point of $F$ and
$F^{n}(x,y)\rightarrow x^{\ast}$ as $n\rightarrow\infty$ for all $x,y\in X$.
\end{corollary}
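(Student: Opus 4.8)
The plan is to deduce Corollary \ref{Cor:4} from Corollaries \ref{Cor:1}, \ref{Cor:2} and \ref{Cor:3} by specializing the bivariate weight to the product form (\ref{eq:16}). The first step is the observation recorded just before the statement: when $\alpha$ has the form (\ref{eq:16}), the auxiliary weight $\beta$ of (\ref{eq:15}) collapses to $\alpha$ itself. Indeed, substituting (\ref{eq:16}) into (\ref{eq:15}) gives $\beta((x,y),(u,v))=\min\{\min\{\alpha_0(x,u),\alpha_0(v,y)\},\min\{\alpha_0(v,y),\alpha_0(x,u)\}\}=\min\{\alpha_0(x,u),\alpha_0(v,y)\}=\alpha((x,y),(u,v))$, so the hypotheses (B1)--(B5) of the earlier corollaries may be read directly in terms of $\alpha$. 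The contraction hypothesis of Corollary \ref{Cor:4} is then precisely (\ref{eq:14}) once $\alpha$ is written as in (\ref{eq:16}), so nothing is needed there.

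The core is a dictionary translating (C1)--(C5) into (B1)--(B5). The key elementary fact is that for the product weight, $\alpha((x,y),(u,v))\geq1$ is equivalent to the conjunction $\alpha_0(x,u)\geq1$ and $\alpha_0(v,y)\geq1$ (since $\min\geq1$ iff both entries are $\geq1$). Unfolding this, (C2) is literally (B2), (C4) is literally (B4), and (C3) is (B3). For (C1)$\,\Rightarrow\,$(B1) I would apply (C1) twice: assuming $\alpha_0(x,u)\geq1$ and $\alpha_0(v,y)\geq1$, one application with the given arguments yields $\alpha_0(F(x,y),F(u,v))\geq1$, while a second application with the swapped arguments $((v,u),(y,x))$ yields $\alpha_0(F(v,u),F(y,x))\geq1$; these two inequalities are exactly what $\alpha((F(x,y),F(y,x)),(F(u,v),F(v,u)))\geq1$ unfolds to, i.e.\ (B1). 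Thus all hypotheses of Corollary \ref{Cor:1} (respectively Corollary \ref{Cor:2}) hold, and the existence of a coupled fixed point $(x^\ast,y^\ast)$ follows.

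For the final clause I need (C5)$\,\Rightarrow\,$(B5), that is, I must promote $\alpha_0$--connectedness of $X$ to $\alpha$--connectedness of $X\times X$ and then invoke Corollary \ref{Cor:3}. The plan is constructive: given distinct $(x,y),(u,v)\in X\times X$, take an $\alpha_0$--chain from $x$ to $u$ and one from $y$ to $v$ supplied by (C5), and lift them to a single $\alpha$--chain in $X\times X$. Here the structure of the product weight is decisive: a one--step move $(a,b)\to(a',b')$ is admissible exactly when $\alpha_0(a,a')\geq1$ together with $\alpha_0(b',b)\geq1$, or the wholly reversed pair holds, so that \emph{a forward $\alpha_0$--step in the first coordinate must be matched by a backward $\alpha_0$--step in the second coordinate}, and conversely.

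I expect this coordination to be the main obstacle, and it is genuinely delicate rather than routine: the two $\alpha_0$--chains need not have the same length nor compatible (pointwise opposite) orientations, so the lift cannot be carried out naively coordinate by coordinate, and without care the conclusion can fail on degenerate weights. The approach I would pursue is to synchronize the two chains by interleaving, padding one coordinate with back--and--forth excursions along a single incident $\alpha_0$--edge (which exists at every point once $X$ is $\alpha_0$--connected and has at least two points) so as to realize, step by step, the opposite--orientation pattern that the other coordinate forces. Once the lifted $\alpha$--chain is produced, (B5) holds, and Corollary \ref{Cor:3} delivers $x^\ast=y^\ast$, the uniqueness of the (coupled) fixed point, and the convergence $F^n(x,y)\to x^\ast$ through the identity $T^n(x,y)=(F^n(x,y),F^n(y,x))$.
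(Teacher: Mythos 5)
Your reduction to Corollaries \ref{Cor:1}--\ref{Cor:3} is exactly the route the paper takes (its entire proof is the sentence that the hypotheses ``check easily'' with $\alpha$ given by (\ref{eq:16})), and the part of your argument carrying the existence statement is correct and complete: the identity $\beta=\alpha$, the equivalence $\alpha((x,y),(u,v))\geq1\Leftrightarrow\alpha_{0}(x,u)\geq1$ and $\alpha_{0}(v,y)\geq1$, the double application of (C1) to obtain (B1), and the literal identifications of (C2), (C3), (C4) with (B2), (B3), (B4) are all right. (Do also record the one-line check that $\alpha$ inherits $N$--transitivity from $\alpha_{0}$ --- apply the $N$--transitivity of $\alpha_{0}$ to the chain of first coordinates and to the \emph{reversed} chain of second coordinates --- since Corollary \ref{Cor:1} requires it.)

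The gap is exactly where you suspected it: (C5) does not imply (B5), and the padding construction you sketch cannot repair this. In an $\alpha$--chain on $X\times X$ a step from $(z,w)$ to $(z',w')$ is admissible iff ($\alpha_{0}(z,z')\geq1$ and $\alpha_{0}(w',w)\geq1$) or ($\alpha_{0}(z',z)\geq1$ and $\alpha_{0}(w,w')\geq1$), so a ``forward'' move in one coordinate must be matched by a ``backward'' move in the other. Now take $X=\{a,b\}$ with $\alpha_{0}(a,b)=1$ and $\alpha_{0}=0$ at the three remaining pairs (including the diagonal). Then $X$ is $\alpha_{0}$--connected, yet the only $\alpha$--comparable pair in $X\times X$ is $\{(a,b),(b,a)\}$: the first alternative above forces the step $(a,b)\rightarrow(b,a)$ and the second forces $(b,a)\rightarrow(a,b)$, so $(a,a)$ and $(b,b)$ are isolated and $X\times X$ is not $\beta$--connected. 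Your proposed excursion along ``a single incident $\alpha_{0}$--edge'' fails precisely here: the only edge incident to $a$ points \emph{out of} $a$, so from $(a,a)$ the second coordinate can only move forward, which forces the first coordinate to move backward, which it cannot. Hence the final clause of the corollary is not established by your argument --- nor, it must be said, by the paper's, which asserts the same implication without proof; you have in fact put your finger on a genuine weakness of the published argument. To salvage the uniqueness clause one must either assume (B5) outright (an orientation-aware connectedness of $X\times X$) or impose extra structure on $\alpha_{0}$ under which the coordinatewise lift does go through, e.g.\ $\alpha_{0}\geq1$ on the diagonal (which is what rescues the ordered case, Theorem \ref{Th:C2}, via reflexivity of $\preceq$) or symmetry of the condition $\alpha_{0}\geq1$.
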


\begin{proof}
It checks easily that the hypotheses of Corollaries \ref{Cor:1}, \ref{Cor:2}
and \ref{Cor:3} are satisfied, with $\alpha$ defined by (\ref{eq:16}).
\end{proof}


\subsection{Fixed point theorems for ${\mathcal{R}}$--contractive mappings of
Meir--Keeler type on a metric space endowed with a $N$--transitive binary
relation}

The notions and results in Section \ref{Sec:2} easily translate to the setting
of metric spaces endowed with a $N$--transitive binary relation.

In what follows, let $(X,d)$ be a metric space, ${\mathcal{R}}$ be a binary
relation over $X$ and $T:X\rightarrow X$. We first start with some terminology
that is symmetrical to that in Section \ref{Sec:2}.

\begin{definition}
We say that $T$ is a ${\mathcal{R}}$--contractive mapping of Meir--Keeler type
(with respect to $d$) if for all $\varepsilon>0$, there exists $\delta
(\varepsilon)>0$ such that
\[
x,y\in X:x{\mathcal{R}}y,~\varepsilon\leq d(x,y)<\varepsilon+\delta
(\varepsilon)\Rightarrow d(Tx,Ty)<\varepsilon.
\]

\end{definition}

\begin{definition}
We say that $T$ is ${\mathcal{R}}$--preserving if
\[
x,y\in X:x{\mathcal{R}}y\Rightarrow Tx{\mathcal{R}}Ty.
\]

\end{definition}

\begin{definition}
We say that a sequence $\{x_{n}\}$ in $X$ is $(T,{\mathcal{R}})$--orbital if
$x_{n}=T^{n}x_{0}$ and $x_{n}{\mathcal{R}}x_{n+1}$ for all $n\in\mathbb{N}$.
\end{definition}

\begin{definition}
We say that $T$ is ${\mathcal{R}}$--orbitally continuous if for every
$(T,{\mathcal{R}})$--orbital sequence $\{x_{n}\}$ in $X$ such that
$x_{n}\rightarrow x\in X$ as $n\rightarrow+\infty$, there exists a subsequence
$\{x_{n(k)}\}$ of $\{x_{n}\}$ such that $Tx_{n(k)}\rightarrow Tx$ as
$k\rightarrow+\infty$.
\end{definition}

\begin{remark}
Clearly, if $T$ is continuous, then $T$ is ${\mathcal{R}}$--orbitally
continuous (for any ${\mathcal{R}}$).
\end{remark}

\begin{definition}
We say that $(X,d)$ is $(T,{\mathcal{R}})$--regular if for every
$(T,{\mathcal{R}})$--orbital sequence $\{x_{n}\}$ in $X$ such that
$x_{n}\rightarrow x\in X$ as $n\rightarrow+\infty$, there exists a subsequence
$\{x_{n(k)}\}$ of $\{x_{n}\}$ such that $x_{n(k)}{\mathcal{R}}x$ for all $k$.
\end{definition}

\begin{definition}
We say that $(X,d)$ is ${\mathcal{R}}$--regular if for every sequence
$\{x_{n}\}$ in $X$ such that $x_{n}\rightarrow x\in X$ as $n\rightarrow
+\infty$ and $x_{n}{\mathcal{R}}x_{n+1}$ for all $n$, there exists a
subsequence $\{x_{n(k)}\}$ of $\{x_{n}\}$ such that $x_{n(k)}{\mathcal{R}}x$
for all $k$.
\end{definition}

\begin{remark}
Clearly, if $(X,d)$ is ${\mathcal{R}}$--regular, then it is also
$(T,{\mathcal{R}})$--regular (for any $T$).
\end{remark}

\begin{definition}
Let $N\in\mathbb{N}$. We say that ${\mathcal{R}}$ is $N$--transitive (on $X$)
if
\[
x_{0},x_{1},x_{2},\dots,x_{N},x_{N+1}\in X:x_{i}{\mathcal{R}}x_{i+1}\text{ for
all }i\in\{0,1,\ldots,N\}\Longrightarrow x_{0}{\mathcal{R}}x_{N+1}.
\]
In particular, for $N=1$ we recover the usual transitivity property.
\end{definition}

\begin{definition}
Let $x,y\in X$. A vector $\zeta=(z_{0},z_{1},\ldots,z_{n})\in X^{n+1}$ is
called a ${\mathcal{R}}$--chain (of order $n$) from $x$ to $y$ if $z_{0}=x$,
$z_{n}=y$ and
\[
z_{i-1}{\mathcal{R}}z_{i}\text{ or }z_{i}{\mathcal{R}}z_{i-1}\text{\quad for
every }i\in\{1,2,\ldots,n\}.
\]

\end{definition}

\begin{definition}
We say that $X$ is ${\mathcal{R}}$--connected if for every $x,y\in X$ with
$x\neq y$, there exists a ${\mathcal{R}}$--chain from $x$ to $y$.
\end{definition}

The main results in Section \ref{Sec:2} translate to the setting of metric
spaces endowed with an arbitrary binary relation as follows.

\begin{corollary}
\label{Cor:5}Let $(X,d)$ be a complete metric space, ${\mathcal{R}}$ a
$N$--transitive binary relation over $X$ (for some $N\in\mathbb{N}%
\setminus\{0\}$) and $T:X\rightarrow X$ a ${\mathcal{R}}$--contractive mapping
of Meir--Keeler type. Assume that:

\begin{enumerate}
\item[\textrm{(D1)}] $T$ is ${\mathcal{R}}$-preserving;

\item[\textrm{(D2)}] there exists $x_{0}\in X$ such that $x_{0}{\mathcal{R}%
}Tx_{0}$.
\end{enumerate}

If either

\begin{enumerate}
\item[\textrm{(D3)}] $T$ is continuous,
\end{enumerate}

\noindent or

\begin{enumerate}
\item[\textrm{(D4)}] $(X,d)$ is $(T,{\mathcal{R}})$--regular,
\end{enumerate}

\noindent then $T$ has a fixed point $x^{\ast}\in X$. Additionally, if

\begin{enumerate}
\item[\textrm{(D5)}] $X$ is ${\mathcal{R}}$--connected,
\end{enumerate}

\noindent then $x^{\ast}$ is the unique fixed point of $T$ and $T^{n}%
(x)\rightarrow x^{\ast}$ (as $n\rightarrow\infty$) for every $x\in X$.
\end{corollary}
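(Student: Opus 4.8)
The plan is to reduce Corollary \ref{Cor:5} to Theorems \ref{Th:1}, \ref{Th:2} and \ref{Th:3} by encoding the binary relation ${\mathcal{R}}$ as a $[0,+\infty)$--valued weight via its indicator. Concretely, I would define $\alpha:X\times X\rightarrow[0,+\infty)$ by setting $\alpha(x,y)=1$ whenever $x{\mathcal{R}}y$ and $\alpha(x,y)=0$ otherwise, so that $\alpha(x,y)\geq 1$ holds precisely when $x{\mathcal{R}}y$. With this dictionary in place, essentially every hypothesis and conclusion of Corollary \ref{Cor:5} becomes a verbatim restatement of the corresponding notion from Section \ref{Sec:2}.

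The only verification requiring a moment's thought is that the ${\mathcal{R}}$--contractive condition translates into the $\alpha$--contractive condition (with the same $\delta$). Fix $\varepsilon>0$ and take the $\delta(\varepsilon)>0$ afforded by the ${\mathcal{R}}$--contractivity of $T$, and let $x,y\in X$ satisfy $\varepsilon\leq d(x,y)<\varepsilon+\delta(\varepsilon)$. If $x{\mathcal{R}}y$, then $\alpha(x,y)=1$ and the ${\mathcal{R}}$--condition gives $\alpha(x,y)d(Tx,Ty)=d(Tx,Ty)<\varepsilon$; if not, then $\alpha(x,y)=0$, whence $\alpha(x,y)d(Tx,Ty)=0<\varepsilon$ trivially, using $\varepsilon>0$. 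In either case (\ref{eq:01}) holds, so $T$ is $\alpha$--contractive of Meir--Keeler type.

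The remaining correspondences are immediate from the definitions, since a $(T,{\mathcal{R}})$--orbital sequence is exactly a $(T,\alpha)$--orbital sequence, and the same substitution $x{\mathcal{R}}y\leftrightarrow\alpha(x,y)\geq 1$ turns $N$--transitivity of ${\mathcal{R}}$ into $N$--transitivity of $\alpha$, ${\mathcal{R}}$--preservation (D1) into $\alpha$--admissibility (A1), condition (D2) into (A2), $(T,{\mathcal{R}})$--regularity (D4) into $(T,\alpha)$--regularity (A4), and ${\mathcal{R}}$--connectedness (D5) into $\alpha$--connectedness (A5); moreover continuity (D3) implies $\alpha$--orbital continuity (A3) by the remark following the definition of $\alpha$--orbital continuity. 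I would therefore conclude by invoking Theorem \ref{Th:1} under (D3) or Theorem \ref{Th:2} under (D4) to obtain the fixed point, and Theorem \ref{Th:3} under the additional hypothesis (D5) to obtain uniqueness together with the convergence $T^{n}x\rightarrow x^{\ast}$ for every $x\in X$.

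I expect no genuine obstacle here: the whole content is the observation that binary relations are the $\{0,1\}$--valued special case of the weight function $\alpha$, so the argument is purely a matter of matching each definition. The closest thing to a subtlety is ensuring that the two contractive conditions agree on \emph{unrelated} pairs, which is handled automatically by the factor $\alpha(x,y)=0$ annihilating the left-hand side.
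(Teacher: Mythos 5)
Your proposal is correct and follows exactly the paper's own route: the paper likewise defines $\alpha$ as the indicator of ${\mathcal{R}}$ and invokes Theorems \ref{Th:1}, \ref{Th:2} and \ref{Th:3}, leaving the routine verifications implicit. Your added check that unrelated pairs cause no trouble (since $\alpha(x,y)=0$ annihilates the left-hand side of (\ref{eq:01}) and $\varepsilon>0$) is precisely the detail the paper omits, and it is handled correctly.
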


\begin{proof}
Define the mapping $\alpha:X\times X\rightarrow\lbrack0,+\infty)$ by
\[
\alpha(x,y)=\left\{
\begin{array}
[c]{ll}%
1, & \text{if }x\mathcal{R}y\\
0, & \text{otherwise.}%
\end{array}
\right.
\]
The conclusions then follows directly from Theorems \ref{Th:1}, \ref{Th:2} and
\ref{Th:3}.
\end{proof}

The following result is a consequence of Corollary \ref{Cor:4} for bivariate
${\mathcal{R}}$--contractive mappings of Meir--Keeler type.

\begin{corollary}
\label{Cor:6}Let $(X,d)$ be a complete metric space, ${\mathcal{R}}$ a
$N$--transitive binary relation over $X$ (for some $N\in\mathbb{N}%
\setminus\{0\}$), and $F:X\times X\rightarrow X$ such that for every
$\varepsilon>0$ there exists $\delta(\varepsilon)>0$ for which:
\[
x,y,u,v\in X:x{\mathcal{R}}y,~v{\mathcal{R}}u,~\varepsilon\leq\frac
{d(x,u)+d(y,v)}{2}<\varepsilon+\delta(\varepsilon)\Rightarrow
d(F(x,y),F(u,v))<\varepsilon.
\]

Suppose that

\begin{enumerate}
\item[\textrm{(E1)}] for all $x,y,u,v\in X$,
\[
x{\mathcal{R}}y,~v{\mathcal{R}}u\Longrightarrow F(x,y){\mathcal{R}}F(u,v);
\]

\item[\textrm{(E2)}] there exists $(x_{0},y_{0})\in X\times X$ such that
\[
x_{0}{\mathcal{R}}F(x_{0},y_{0}),\quad F(y_{0},x_{0}){\mathcal{R}}y_{0}.
\]
If either

\item[\textrm{(E3)}] $F$ is continuous,
\end{enumerate}

\noindent or

\begin{enumerate}
\item[\textrm{(E4)}] for every sequence $\{(x_{n},y_{n})\}$ in $X\times X$
such that $x_{n}\rightarrow x\in X$, $y_{n}\rightarrow y\in X$ as
$n\rightarrow+\infty$, and $x_{n}{\mathcal{R}}x_{n+1},~y_{n+1}{\mathcal{R}%
}y_{n}$ for all $n\in\mathbb{N}$, there exists a subsequence $\{(x_{n(k)}%
,y_{n(k)})\}$ such that $x_{n(k)}{\mathcal{R}}x,~y{\mathcal{R}}y_{n(k)}$ for
all $k\in\mathbb{N}$,
\end{enumerate}

\noindent then $F$ has a coupled fixed point $(x^{\ast},y^{\ast})\in X\times
X$. Additionally, if

\begin{enumerate}
\item[\textrm{(E5)}] $X$ is ${\mathcal{R}}$--connected,
\end{enumerate}

then $x^{\ast}=y^{\ast}$, $(x^{\ast},x^{\ast})$ is the unique coupled fixed
point of $F$, $x^{\ast}$ is the unique fixed point of $F$ and $F^{n}%
(x,y)\rightarrow x^{\ast}$ as $n\rightarrow\infty$ for all $x,y\in X$.
\end{corollary}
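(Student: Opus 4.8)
The plan is to deduce Corollary~\ref{Cor:6} from Corollary~\ref{Cor:4} through the same device that turns Corollary~\ref{Cor:5} into a consequence of Theorems~\ref{Th:1}--\ref{Th:3}: encode the binary relation $\mathcal{R}$ as the characteristic function $\alpha_{0}:X\times X\to[0,+\infty)$ given by $\alpha_{0}(x,y)=1$ if $x\mathcal{R}y$ and $\alpha_{0}(x,y)=0$ otherwise, so that the dictionary $\alpha_{0}(x,y)\geq 1\Leftrightarrow x\mathcal{R}y$ holds throughout. With this $\alpha_{0}$, every hypothesis of Corollary~\ref{Cor:6} should translate into the corresponding hypothesis of Corollary~\ref{Cor:4}, and the conclusion will then be inherited directly.

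First I would dispatch the structural conditions, all of which are immediate from the dictionary. The $N$--transitivity of $\mathcal{R}$ gives the $N$--transitivity of $\alpha_{0}$, since $\alpha_{0}(x_{i},x_{i+1})\geq 1$ means exactly $x_{i}\mathcal{R}x_{i+1}$ and the defining implication is word for word the same on both sides; similarly (E2) becomes (C2), the continuity assumptions (E3) and (C3) are identical, the sequential regularity conditions (E4) and (C4) match line by line once each $\alpha_{0}(\cdot,\cdot)\geq 1$ is read as the relation it stands for, and the $\mathcal{R}$--connectedness in (E5) is literally the $\alpha_{0}$--connectedness in (C5), because $\mathcal{R}$--chains and $\alpha_{0}$--chains are prescribed by the same inequalities.

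The only clause that calls for a moment's thought is the Meir--Keeler contractive condition, and here the argument I would give splits on the value of the factor $\min\{\alpha_{0}(x,u),\alpha_{0}(v,y)\}$. If this minimum is $0$, then the inequality $\min\{\alpha_{0}(x,u),\alpha_{0}(v,y)\}\,d(F(x,y),F(u,v))<\varepsilon$ demanded by Corollary~\ref{Cor:4} holds automatically because $\varepsilon>0$; if it equals $1$, then both governing $\mathcal{R}$--relations are in force, the factor disappears, and the required inequality collapses to $d(F(x,y),F(u,v))<\varepsilon$, which is exactly the estimate supplied by the contractive hypothesis of Corollary~\ref{Cor:6} with the same choice of $\delta(\varepsilon)$. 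The point needing attention is purely bookkeeping: one must keep the order of the arguments in $\min\{\alpha_{0}(x,u),\alpha_{0}(v,y)\}$ aligned with the relational premises of Corollary~\ref{Cor:6}, so that the two conditions label the same pairs. Condition (E1) is then matched to (C1) in the identical fashion.

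Once the dictionary has been checked on every hypothesis, I would simply invoke Corollary~\ref{Cor:4}: it yields a coupled fixed point $(x^{\ast},y^{\ast})$ under (E1)--(E2) together with (E3) or (E4), and, upon adjoining (E5), the equality $x^{\ast}=y^{\ast}$, the uniqueness of $(x^{\ast},x^{\ast})$ as a coupled fixed point and of $x^{\ast}$ as a fixed point, and the convergence $F^{n}(x,y)\to x^{\ast}$ for all $x,y\in X$. I do not expect any real obstacle here, since all the mathematical content sits in Corollary~\ref{Cor:4}; the entire task is to confirm that passing to the characteristic function preserves each hypothesis, the subtlest instance being the argument-order matching in the contractive condition just described.
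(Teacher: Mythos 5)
Your proposal is correct and is essentially the paper's own proof: the paper likewise takes $\alpha_{0}$ to be the indicator function of $\mathcal{R}$ and invokes Corollary~\ref{Cor:4}, leaving the hypothesis-by-hypothesis translation implicit. Your care over the argument order in $\min\{\alpha_{0}(x,u),\alpha_{0}(v,y)\}$ is the right instinct, since the premises as printed in Corollary~\ref{Cor:6} (namely $x\mathcal{R}y$, $v\mathcal{R}u$) must be read as $x\mathcal{R}u$, $v\mathcal{R}y$ for the contractive condition and (E1) to line up with their counterparts in Corollary~\ref{Cor:4} --- compare Theorem~\ref{Th:C2}.
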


\begin{proof}
Define the mapping $\alpha_{0}:X\times X\rightarrow\lbrack0,+\infty)$ by
\[
\alpha_{0}(x,y)=\left\{
\begin{array}
[c]{ll}%
1, & \text{if }x\mathcal{R}y\\
0, & \text{otherwise.}%
\end{array}
\right.
\]
The conclusions then follows directly from Corollary \ref{Cor:4}.
\end{proof}

\subsection{Fixed point results for cyclic contractive mappings of
Meir--Keeler type}

In this section, we obtain some fixed point results for cyclic $\alpha
$--contractions of Meir--Keeler type. We start by recalling the result
obtained by Kirk, Srinivasan and Veeramani in \cite{Kirk2003} for cyclic
contractive mappings.

\begin{theorem}
[\cite{Kirk2003}]Let $(X,d)$ be a complete metric space, $\left\{  A_{1}%
,A_{2},\ldots,A_{N}\right\}  $ a family of nonempty and closed subsets of $X$
and $T:X\rightarrow X$. Suppose that the following conditions hold:

\begin{enumerate}
\item[\textrm{(F1)}] $T(A_{i})\subseteq A_{i+1}$ for all $i\in\{1,2\dots,N\}$
(where $A_{N+1}=A_{1}$);

\item[\textrm{(F2)}] there exists $k\in(0,1)$ such that%
\[
d(Tx,Ty)\leq kd(x,y)\text{\quad for all }x\in A_{i},y\in A_{i+1}%
,i\in\{1,2\dots,N\}\text{.}%
\]
Then $\bigcap_{i=1}^{N}A_{i}$ is non-empty and $T$ has a unique fixed point in
$\bigcap_{i=1}^{N}A_{i}$.
\end{enumerate}
\end{theorem}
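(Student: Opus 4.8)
The plan is to deduce this theorem from the machinery of Section~\ref{Sec:2} by recasting the cyclic structure as an $\alpha$--contraction of Meir--Keeler type. First I would pass to the set $Y:=\bigcup_{i=1}^{N}A_{i}$, which is a finite union of closed subsets of the complete space $X$, hence itself closed and therefore complete; by (F1) we have $T(Y)\subseteq Y$, so $T$ restricts to a self-map of $Y$. On $Y$ I would use exactly the function $\alpha$ from Example~\ref{Ex:1}, namely $\alpha(x,y)=1$ if $(x,y)\in R:=\bigcup_{i=1}^{N}(A_{i}\times A_{i+1})$ and $\alpha(x,y)=0$ otherwise, which is $N$--transitive by that example.

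Next I would verify the hypotheses of Theorem~\ref{Th:1} on $(Y,d)$. The $\alpha$--Meir--Keeler condition is the only place where the constant $k$ enters: when $\alpha(x,y)=0$ the inequality $\alpha(x,y)d(Tx,Ty)<\varepsilon$ is automatic, while when $\alpha(x,y)=1$ we have $x\in A_{i}$, $y\in A_{i+1}$ for some $i$, so (F2) gives $d(Tx,Ty)\le k\,d(x,y)$; choosing $\delta(\varepsilon):=\frac{1-k}{k}\varepsilon$ turns $\varepsilon\le d(x,y)<\varepsilon+\delta(\varepsilon)$ into $d(Tx,Ty)<\varepsilon$. Condition (A1) follows from (F1), since $(x,y)\in A_{i}\times A_{i+1}$ forces $(Tx,Ty)\in A_{i+1}\times A_{i+2}\subseteq R$; and (A2) holds for any $x_{0}\in A_{1}$, because $(x_{0},Tx_{0})\in A_{1}\times A_{2}\subseteq R$.

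For existence I would invoke Theorem~\ref{Th:2}, so it remains to check the regularity condition (A4); this step also produces the nonemptiness of the intersection and is the crux of the argument. Given a $(T,\alpha)$--orbital sequence with $x_{0}\in A_{i_{0}}$ (the index $i_{0}$ being fixed by $\alpha(x_{0},x_{1})\ge1$), repeated use of (F1) shows $x_{n}\in A_{i_{0}+n}$ with indices taken mod $N$, so for each residue $r$ the subsequence $\{x_{n}:n\equiv r\}$ lies in a single \emph{closed} set $A_{j}$ and hence its limit $x$ belongs to $A_{j}$; letting $r$ range over all residues gives $x\in\bigcap_{j=1}^{N}A_{j}$. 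In particular $x\in A_{i_{0}+n+1}$ for every $n$, whence $(x_{n},x)\in A_{i_{0}+n}\times A_{i_{0}+n+1}\subseteq R$ and $\alpha(x_{n},x)\ge1$ for all $n$, which is even stronger than (A4). Theorem~\ref{Th:2} now yields a fixed point $x^{\ast}$; since $x^{\ast}$ is the limit of such an orbit, the displayed argument already places $x^{\ast}\in\bigcap_{i=1}^{N}A_{i}$, so this intersection is nonempty.

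Finally, for uniqueness and global convergence I would add (A5) and appeal to Theorem~\ref{Th:3}. To see that $Y$ is $\alpha$--connected, fix $x\neq y$ in $Y$, say $x\in A_{i}$ and $y\in A_{j}$, and pick any $c\in\bigcap_{m}A_{m}$ (available from the previous step). Then $c\in A_{i+1}$ and $c\in A_{j-1}$, so $(x,c)\in A_{i}\times A_{i+1}\subseteq R$ and $(c,y)\in A_{j-1}\times A_{j}\subseteq R$; thus $(x,c,y)$ is an $\alpha$--chain from $x$ to $y$ (shortened in the degenerate cases $c=x$ or $c=y$). Theorem~\ref{Th:3} then gives that $x^{\ast}$ is the unique fixed point of $T$ in $Y$---hence the unique one in $\bigcap_{i=1}^{N}A_{i}$, since any fixed point lying in some $A_{i}$ lies in every $A_{i+m}$ by (F1)---and that $T^{n}x\to x^{\ast}$ for all $x\in Y$. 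The main obstacle throughout is the regularity/closedness step: tracking the cyclic index of the orbit through the possibly overlapping sets $A_{i}$ and using that each is closed is what forces the limit into the intersection and simultaneously secures both existence and nonemptiness.
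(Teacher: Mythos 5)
Your argument is correct and follows essentially the same route as the paper, which does not prove this cited theorem directly but establishes the stronger Corollary \ref{Cor:7} by exactly this construction (restrict to $Y=\bigcup_{i}A_{i}$, take $\alpha$ to be the indicator of $R=\bigcup_{i}(A_{i}\times A_{i+1})$, check $N$--transitivity, admissibility, (A2), regularity and $\alpha$--connectedness, then apply Theorems \ref{Th:2} and \ref{Th:3}); your only genuinely additional step is the standard reduction of (F2) to the Meir--Keeler condition (F3) via $\delta(\varepsilon)=\bigl(\tfrac{1}{k}-1\bigr)\varepsilon$, which is precisely how the stated theorem follows from Corollary \ref{Cor:7}. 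Your minor variations---verifying regularity by showing the limit of the orbital sequence already lies in $\bigcap_{i}A_{i}$ (which yields nonemptiness of the intersection for free), and building $\alpha$--chains through a common point of that intersection rather than through consecutive $A_{i}$'s---are correct and, if anything, slightly tidier than the paper's versions.
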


The aim of our next result is to weaken the contraction condition (F2) by
considering the following condition of Meir--Keeler type:

\begin{enumerate}
\item[(F3)] for every $\varepsilon>0$, there exists $\delta(\varepsilon)>0$
such that%
\[
x\in A_{i},y\in A_{i+1},i\in\{1,2,\ldots,N\}:\varepsilon\leq
d(x,y)<\varepsilon+\delta(\varepsilon))\Rightarrow d(Tx,Ty)<\varepsilon.
\]

\end{enumerate}

\begin{corollary}
\label{Cor:7}Let $(X,d)$ be a complete metric space, $\left\{  A_{1}%
,A_{2},\ldots,A_{N}\right\}  $ a family of nonempty and closed subsets of $X$
and $T:X\rightarrow X$. Suppose that \textrm{(F1)} and \textrm{(F3)} hold.

Then $\bigcap_{i=1}^{N}A_{i}$ is non-empty and $T$ has a fixed point $x^{\ast
}\in\bigcap_{i=1}^{N}A_{i}$. Moreover, $x^{\ast}$ is the unique fixed point of
$T$ in $\bigcup_{i=1}^{N}A_{i}$ and $T^{n}(x)\rightarrow x^{\ast}$ for all
$x\in\bigcup_{i=1}^{N}A_{i}$.
\end{corollary}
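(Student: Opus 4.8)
The plan is to deduce this from Corollary \ref{Cor:5} by restricting to $Y:=\bigcup_{i=1}^{N}A_{i}$ and endowing it with the $N$--transitive relation of Example \ref{Ex:1}. First I would note that $Y$, being a finite union of closed subsets of the complete space $(X,d)$, is itself closed and hence complete, and that (F1) guarantees $T(Y)\subseteq Y$, so $T$ restricts to a self-map of $Y$. On $Y$ I define ${\mathcal{R}}$ by declaring $x{\mathcal{R}}y$ iff $x\in A_{i}$ and $y\in A_{i+1}$ for some $i$ (with $A_{N+1}=A_{1}$); this is exactly the relation whose $N$--transitivity is recorded in Example \ref{Ex:1}. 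With this relation, (F3) says precisely that $T$ is an ${\mathcal{R}}$--contractive mapping of Meir--Keeler type, (F1) gives that $T$ is ${\mathcal{R}}$--preserving (condition (D1)), and choosing any $x_{0}\in A_{1}$ (which is nonempty) yields $Tx_{0}\in A_{2}$, i.e.\ $x_{0}{\mathcal{R}}Tx_{0}$ (condition (D2)).

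The key step, and the one I expect to be the main obstacle, is to verify the regularity condition (D4) while simultaneously extracting the nonemptiness of the intersection. Starting from $x_{0}\in A_{1}$, condition (F1) forces the orbit to cycle through the sets, namely $x_{n}=T^{n}x_{0}\in A_{(n\bmod N)+1}$, so $\{x_{n}\}$ is automatically a $(T,{\mathcal{R}})$--orbital sequence. If $x_{n}\to x\in Y$, then for each fixed $i\in\{1,\dots,N\}$ the terms lying in $A_{i}$ form a subsequence converging to $x$; since $A_{i}$ is closed, $x\in A_{i}$. As this holds for every $i$, we get $x\in\bigcap_{i=1}^{N}A_{i}$, which is therefore nonempty. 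Consequently $x$ belongs to every $A_{i+1}$, so $x_{n}{\mathcal{R}}x$ holds for all $n$; in particular the space $(Y,d)$ is $(T,{\mathcal{R}})$--regular and (D4) holds. Corollary \ref{Cor:5} (through its (D4) branch) then produces a fixed point $x^{\ast}$, which is the limit of $\{x_{n}\}$ and hence lies in $\bigcap_{i=1}^{N}A_{i}$.

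It remains to obtain uniqueness and global convergence, for which I would verify the connectedness condition (D5) using the point $x^{\ast}\in\bigcap_{i=1}^{N}A_{i}$ just found as a hub. Given distinct $x,y\in Y$, say $x\in A_{i}$ and $y\in A_{j}$, the membership $x^{\ast}\in A_{i+1}\cap A_{j-1}$ yields $x{\mathcal{R}}x^{\ast}$ and $x^{\ast}{\mathcal{R}}y$, so $(x,x^{\ast},y)$ is an ${\mathcal{R}}$--chain from $x$ to $y$; thus $Y$ is ${\mathcal{R}}$--connected. Applying the final assertion of Corollary \ref{Cor:5} then gives that $x^{\ast}$ is the unique fixed point of $T$ in $Y=\bigcup_{i=1}^{N}A_{i}$ and that $T^{n}x\to x^{\ast}$ for every $x\in\bigcup_{i=1}^{N}A_{i}$, which is the desired conclusion. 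The only delicate point throughout is the interplay in the middle paragraph between closedness of the $A_{i}$ and the cyclic structure of the orbit, which is what converts the abstract hypothesis (D4) into both the location of the fixed point inside the intersection and the nonemptiness of $\bigcap_{i=1}^{N}A_{i}$.
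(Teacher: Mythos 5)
Your proposal is correct and follows essentially the same route as the paper: restrict to the closed (hence complete) union $Y=\bigcup_{i=1}^{N}A_{i}$, encode the cyclic structure as the $N$--transitive relation $\bigcup_{i=1}^{N}\left(A_{i}\times A_{i+1}\right)$, and invoke the regularity branch of the main theorems together with the connectedness hypothesis for uniqueness and convergence. The only differences are cosmetic: you verify regularity by showing that the limit of a convergent orbital sequence already lies in $\bigcap_{i=1}^{N}A_{i}$ (the paper instead extracts the subsequence lying in $A_{j-1}$ and locates $x^{\ast}$ in the intersection afterwards via the one-line argument $x^{\ast}\in A_{i}\Rightarrow x^{\ast}=Tx^{\ast}\in A_{i+1}$, which you may prefer since the statement of Corollary \ref{Cor:5} does not literally assert that the fixed point is the limit of the Picard orbit), and you build the ${\mathcal{R}}$--chain through the hub $x^{\ast}$ rather than through arbitrary intermediate points of the $A_{k}$.
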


\begin{proof}
Let $Y:=\bigcup_{i=1}^{N}A_{i}$. Then $Y$ is a closed part of $X$; hence,
$(Y,d)$ is a complete metric space. Moreover, the restriction $\left.
T\right\vert _{Y}$ of $T$ to $Y$ is a self-map of $Y$, by (F1); for
convenience, we write $T$ instead of $\left.  T\right\vert _{Y}$.

Define the mapping $\alpha:Y\times Y\rightarrow\lbrack0,+\infty)$ by
\[
\alpha(x,y)=\left\{
\begin{array}
[c]{ll}%
1, & \text{if }(x,y)\in R:=\bigcup_{i=1}^{N}\left(  A_{i}\times A_{i+1}\right)
\\
0, & \text{otherwise.}%
\end{array}
\right.
\]
We check that the conditions in Theorem \ref{Th:2} are satisfied for the
complete metric space $(Y,d)$, the mappings $\alpha$ and $T$.

First, define $A_{i+kN}:=A_{i}$ for all $i\in\{1,2,\ldots,N\}$ and
$k\in\mathbb{Z}$. Then (F1) extends to%
\[
T(A_{i})\subseteq A_{i+1}\text{\quad for all }i\in\mathbb{Z}\text{.}%
\]

We check that $\alpha$ is $N$--transitive (see also Example \ref{Ex:1}).
Indeed, let $x_{0},x_{1},\dots,x_{N+1}\in Y$ such that $\alpha(x_{k}%
,x_{k+1})\geq1$ (i.e., $(x_{k},x_{k+1})\in R$) for all $k\in\{0,1,\ldots,N\}$.
This means that there exists $i\in\{1,\ldots,N\}$ such that
\[
x_{0}\in A_{i},~x_{1}\in A_{i+1},\ldots,x_{k}\in A_{i+k},\ldots,x_{N+1}\in
A_{i+N+1}=A_{i+1}\text{,}%
\]
hence $(x_{0},x_{N+1})\in A_{i}\times A_{i+1}\subseteq R$, which finally leads
to $\alpha(x_{0},x_{N+1})\geq1$.

Clearly, $T$ is $\alpha$--contractive of Meir--Keeler type, by (F3).

We claim next that $T$ is $\alpha$--admissible, i.e., (A1) is satisfied.
Indeed, let $x,y\in Y$ such that $\alpha(x,y)\geq1$; hence, there exists
$i\in\{1,2\dots,N\}$ such that $x\in A_{i},y\in A_{i+1}$. Then, by (F1),
$\left(  Tx,Ty\right)  \in\left(  A_{i+1},A_{i+2}\right)  \subseteq R$, hence
$\alpha\left(  Tx,Ty\right)  \geq1$.

Now, let $x_{0}\in A_{1}$ arbitrary. Then $Tx_{0}\in A_{2}$, hence
$\alpha(x_{0},Tx_{0})\geq1$ which concludes (A2).

Next, we prove (A4), by showing that $(Y,d)$ is $\alpha$--regular, so let
$\{x_{n}\}$ be a sequence in $Y$ such that
\[
x_{n}\rightarrow x\in Y\text{ as }n\rightarrow\infty\quad\text{and\quad}%
\alpha(x_{n},x_{n+1})\geq1\text{ for all }n\in\mathbb{N}\text{.}%
\]
It follows that there exist $i,j\in\{1,\ldots,N\}$ such that
\[
x_{n}\in A_{i+n}\text{ for all }n\in\mathbb{N}\text{\quad and\quad}x\in
A_{j},
\]
hence
\[
x_{(j-i-1+N)+kN}\in A_{j-1+(k+1)N}=A_{j-1}\quad\text{for all }k\in\mathbb{N};
\]
By letting
\[
n(k):=(j-i-1+N)+kN\quad\text{for all }k\in\mathbb{N}\text{,}%
\]
note that $j-i-1+N\geq0$, and we conclude that the subsequence $\left\{
x_{n(k)}\right\}  $ satisfies
\[
(x_{n(k)},x)\in A_{j-1}\times A_{j}\subseteq R\quad\text{for all }%
k\in\mathbb{N}%
\]
hence $\alpha(x_{n(k)},x)\geq1$ for all $k$, which proves our claim.

Now, all the conditions in Theorem \ref{Th:2} (for $(Y,d)$, $\alpha$ and $T$)
are satisfied, hence there exists a fixed point $x^{\ast}\in Y$ of $T$.
Clearly, $x^{\ast}\in\bigcap_{i=1}^{N}A_{i}$, since%
\[
x^{\ast}\in A_{k}\text{ for some }k\in\{1,2,\ldots,N\}
\]
and%
\[
x^{\ast}\in A_{i}\Rightarrow x^{\ast}=Tx^{\ast}\in A_{i+1}\text{ for all
}i\text{.}%
\]

Moreover, it is straightforward to check that $Y$ is $\alpha$--connected,
i.e., (A5) is satisfied. Indeed, if $x,y\in Y$ ($x\neq y$) with $x\in A_{i}$,
$y\in A_{j}$ ($i,j\in\{1,2,\ldots,N\}$), then let $z_{0}:=x$, $z_{k}\in
A_{k+i}$ arbitrary for every $k\in\{1,2,\ldots,N+j-i-1\}$ and $z_{N+j-i}:=y$.
Note that $N+j-i\geq1$. Then $(z_{k-1},z_{k})\in R$ (i.e., $\alpha
(z_{k-1},z_{k})\geq1$) for every $k\in\{1,2,\ldots,N+j-i\}$, hence
$(z_{0},z_{1},\ldots,z_{N+j-i})$ is a $\alpha$-chain from $x$ to $y$.

Now, the rest of the conclusion follows by Theorem \ref{Th:3}.
\end{proof}


\section{Some consequences in ordered metric spaces}

Clearly, the initial result of Meir and Keeler (Theorem \ref{Th:0}) follows as
a particular case of our Theorems \ref{Th:2} and \ref{Th:3}, by simply
choosing $\alpha(x,y)=1$ for all $x,y\in X$. In what follows, we will also
show that several fixed point and coupled fixed point results in ordered
metric spaces can be easily deduced (and improved) from our theorems.

\subsection{Fixed point results in ordered metric spaces}

Let $X$ be a nonempty set. Recall that a binary relation $\preceq$ over $X$ is
called a partial order if it is reflexive, transitive and anti-symmetric. If
$\preceq$ is a partial order over $X$, then $x,y\in X$ are called comparable
(subject to $\preceq$) if $x\preceq y$ or $y\preceq x$. Also, $X$ is called
$\preceq$--connected if for every $x,y\in X$, there exist $z_{0},z_{1}%
,\ldots,z_{n}\in X$ such that $z_{0}=x$, $z_{n}=y$ and $z_{i-1},z_{i}$ are
comparable for every $i\in\{1,2,\ldots,n\}$.

In \cite{Harjani2011}, Harjani et al. obtained several fixed point results in
partially ordered sets for mappings satisfying some contraction condition of
Meir--Keeler type. The main results in \cite{Harjani2011} for the case of
nondecreasing mappings can be summarized as follows.

\begin{theorem}
[\cite{Harjani2011}]\label{Th:C1} Let $(X,d)$ be a complete metric space,
$\preceq$ a partial order over $X$ and $T:X\rightarrow X$ such that for all
$\varepsilon>0$ there exists $\delta(\varepsilon)>0$ for which:%
\[
x,y\in X:x\preceq y,~\varepsilon\leq d(x,y)<\varepsilon+\delta(\varepsilon
)\Rightarrow d(Tx,Ty)<\varepsilon\text{.}%
\]
Assume that:

\begin{enumerate}
\item[\textrm{(G1)}] $T$ is nondecreasing (subject to $\preceq$);

\item[\textrm{(G2)}] there exists $x_{0}\in X$ such that $x_{0}\preceq Tx_{0}$.
\end{enumerate}

If either

\begin{enumerate}
\item[\textrm{(G3)}] $T$ is continuous,
\end{enumerate}

\noindent or

\begin{enumerate}
\item[\textrm{(G4)}] for every nondecreasing sequence $\{x_{n}\}$ in $X$ such
that $x_{n}\rightarrow x\in X$, there exists a subsequence $\{x_{n(k)}\}$ of
$\{x_{n}\}$ such that $x_{n(k)}\preceq x$ for all $k\in\mathbb{N}$,
\end{enumerate}

\noindent then $T$ has a fixed point. In addition, if

\begin{enumerate}
\item[\textrm{(G5)}] for every $x,y\in X$, there exists $z\in X$ which is
comparable to $x$ and $y$,
\end{enumerate}

\noindent then the fixed point of $T$ is unique.
\end{theorem}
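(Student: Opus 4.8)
The plan is to recognize Theorem \ref{Th:C1} as a direct specialization of Corollary \ref{Cor:5}, obtained by taking the binary relation ${\mathcal{R}}$ to be the partial order $\preceq$ itself. Since $\preceq$ is by definition transitive, it is $1$--transitive, hence qualifies as an $N$--transitive relation with $N=1$; this is exactly the standing hypothesis imposed on ${\mathcal{R}}$ in Corollary \ref{Cor:5}.

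With this identification in place, I would verify that each hypothesis of Theorem \ref{Th:C1} translates into the corresponding hypothesis of Corollary \ref{Cor:5}. The contraction condition in the statement is precisely the assertion that $T$ is a $\preceq$--contractive mapping of Meir--Keeler type. Condition (G1), that $T$ is nondecreasing, says exactly $x\preceq y\Rightarrow Tx\preceq Ty$, which is (D1) ($T$ is $\preceq$--preserving); (G2) is literally (D2); and (G3) is (D3). For the regularity alternative, I would observe that (G4) asserts that $(X,d)$ is $\preceq$--regular --- a nondecreasing sequence being precisely one with $x_{n}\preceq x_{n+1}$ for all $n$ --- and then invoke the remark that $\preceq$--regularity implies $(T,\preceq)$--regularity for any $T$, so that (D4) follows.

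Finally, for the uniqueness clause I would derive (D5), that $X$ is $\preceq$--connected, from (G5). Given $x\neq y$ in $X$, condition (G5) supplies some $z\in X$ comparable to both $x$ and $y$; then the vector $(x,z,y)$ is a $\preceq$--chain from $x$ to $y$, since consecutive entries are comparable. Hence $X$ is $\preceq$--connected, and the uniqueness of the fixed point together with the convergence $T^{n}x\rightarrow x^{\ast}$ for every $x\in X$ follows from the corresponding conclusion of Corollary \ref{Cor:5}. There is no genuine obstacle here: the argument is essentially a dictionary translation, and the only point requiring a moment's care is the (routine) construction of the length--two chain witnessing $\preceq$--connectedness from the weaker comparability hypothesis (G5).
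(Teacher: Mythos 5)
Your proposal is correct and follows exactly the route the paper takes: the paper derives Theorem \ref{Th:C1} as an immediate specialization of Corollary \ref{Cor:5} with ${\mathcal{R}}=\preceq$ (a transitive, hence $1$--transitive relation), and your length--two chain $(x,z,y)$ is precisely why the paper can note that (G5) may be weakened to $\preceq$--connectedness. Nothing is missing.
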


As it can be easily seen, this result follows straight from Corollary
\ref{Cor:5}, with $\mathcal{R}$ being the partial order $\preceq$. Moreover,
(G5) can be replaced by the weaker assumption:

\begin{enumerate}
\item[\textrm{(G5a)}] $X$ is $\preceq$--connected.
\end{enumerate}

Also, if $x^{\ast}$ is the unique fixed point of $T$, then $T^{n}%
(x)\rightarrow x^{\ast}$ (as $n\rightarrow\infty$) for every $x\in X$. This
follows by Corollary \ref{Cor:5} and its an extension of the conclusion in
Theorem \ref{Th:C1}.


\subsection{Coupled fixed point results in ordered metric spaces}

In \cite{Samet2010}, Samet studied the coupled fixed points of mixed strict
monotone mappings that satisfied a contraction condition of Meir--Keeler type,
thereby extending the previous work of Bhaskar and Lakshmikantham
\cite{Bhaskar2006}. In what follows we present an extension of the results of
Samet \cite{Samet2010}; in this direction, we do not require that the mixed
monotone property be strict and we also weaken other assumptions. We also
improve the conclusion.

First, recall the following definition:

\begin{definition}
[\cite{Bhaskar2006}]Let $(X,\preceq)$ be a partially ordered set. A mapping
$F:X\times X\rightarrow X$ is said to have the mixed monotone property if
\[
x_{1},x_{2},y_{1},y_{2}\in X:x_{1}\preceq x_{2},~y_{1}\succeq y_{2}%
\Longrightarrow F(x_{1},y_{1})\preceq F(x_{2},y_{2}).
\]

\end{definition}

Our extension of the main results in \cite{Samet2010} follows straight from
Corollary \ref{Cor:6}, with $\mathcal{R}$ being the partial order $\preceq$,
and can be stated as follows.

\begin{theorem}
\label{Th:C2}Let $(X,d)$ be a complete metric space, $\preceq$ a partial order
over $X$ and $F:X\times X\rightarrow X$ such that for every $\varepsilon>0$
there exists $\delta(\varepsilon)>0$ for which:
\[
x,y,u,v\in X:x\preceq u,~y\succeq v,~\varepsilon\leq\frac{1}{2}%
[d(x,u)+d(y,v)]<\varepsilon+\delta(\varepsilon)\Rightarrow
d(F(x,y),F(u,v))<\varepsilon.
\]
Suppose that:

\begin{enumerate}
\item[\textrm{(H1)}] $F$ has the mixed monotone property;

\item[\textrm{(H2)}] there exist $x_{0},y_{0}\in X$ such that $x_{0}\preceq
F(x_{0},y_{0})$ and $y_{0}\succeq F(y_{0},x_{0})$.
\end{enumerate}

If either

\begin{enumerate}
\item[\textrm{(H3)}] $F$ is continuous,
\end{enumerate}

\noindent or

\begin{enumerate}
\item[\textrm{(H4)}] $(X,d,\preceq)$ has the following property: if
$\{x_{n}\}$ is a nondecreasing (respectively, nonincreasing) sequence in $X$
such that $x_{n}\rightarrow x$, then $x_{n}\preceq x$ (respectively,
$x_{n}\succeq x$) for all $n$,
\end{enumerate}

\noindent then $F$ has a coupled fixed point $(x^{\ast},y^{\ast})\in X\times
X$. In addition, if

\begin{enumerate}
\item[\textrm{(H5)}] $X$ is $\preceq$--connected,
\end{enumerate}

\noindent then $x^{\ast}=y^{\ast}$, $(x^{\ast},x^{\ast})$ is the unique
coupled fixed point of $F$, $x^{\ast}$ is the unique fixed point of $F$ and
$F^{n}(x,y)\rightarrow x^{\ast}$ as $n\rightarrow\infty$ for all $x,y\in X$.
\end{theorem}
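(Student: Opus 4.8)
The plan is to obtain Theorem \ref{Th:C2} as a direct specialization of Corollary \ref{Cor:6}, taking the binary relation ${\mathcal{R}}$ to be the partial order $\preceq$. First I would note that a partial order is in particular transitive, hence $N$--transitive for every $N\geq1$; thus the $N$--transitivity requirement of Corollary \ref{Cor:6} is automatically met (one may simply take $N=1$). Moreover, reading $x\preceq u$ and $y\succeq v$ as the relations $x{\mathcal{R}}u$ and $v{\mathcal{R}}y$, the Meir--Keeler contraction hypothesis of Theorem \ref{Th:C2} is exactly the contraction hypothesis of Corollary \ref{Cor:6}.

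Next I would match the remaining assumptions one by one. The mixed monotone property (H1) is precisely the preservation condition (E1): it asserts that $x\preceq u$ together with $v\preceq y$ forces $F(x,y)\preceq F(u,v)$. Assumption (H2) is (E2) verbatim, since $x_{0}\preceq F(x_{0},y_{0})$ and $y_{0}\succeq F(y_{0},x_{0})$ are the relations $x_{0}{\mathcal{R}}F(x_{0},y_{0})$ and $F(y_{0},x_{0}){\mathcal{R}}y_{0}$. Continuity (H3) is (E3) unchanged, and the connectivity assumption (H5) is literally the ${\mathcal{R}}$--connectedness (E5).

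The one step deserving care is the translation of the regularity assumption (H4) into (E4). Here I would observe that the orbital hypotheses of (E4), namely $x_{n}{\mathcal{R}}x_{n+1}$ and $y_{n+1}{\mathcal{R}}y_{n}$, say exactly that $\{x_{n}\}$ is nondecreasing and $\{y_{n}\}$ is nonincreasing; then (H4) yields $x_{n}\preceq x$ and $y_{n}\succeq y$ for every $n$, that is, $x_{n}{\mathcal{R}}x$ and $y{\mathcal{R}}y_{n}$, so (E4) holds with the full sequence serving as its own subsequence. With all hypotheses verified, Corollary \ref{Cor:6} immediately delivers the existence of a coupled fixed point $(x^{\ast},y^{\ast})$ and, once (H5) is added, the equality $x^{\ast}=y^{\ast}$, the uniqueness of the coupled (and ordinary) fixed point, and the convergence $F^{n}(x,y)\rightarrow x^{\ast}$ for all $x,y\in X$. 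I do not expect any genuine obstacle beyond keeping the variable pairings straight in the contraction and monotonicity conditions and correctly tracking the directions of comparability in the passage from (H4) to (E4).
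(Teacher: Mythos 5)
Your proposal is correct and is exactly the paper's argument: the paper proves Theorem \ref{Th:C2} by the single remark that it follows from Corollary \ref{Cor:6} with $\mathcal{R}$ taken to be $\preceq$, and you have merely made the hypothesis-matching explicit (including the correct observation that transitivity of a partial order gives $N$--transitivity, and that (H4) yields (E4) with the full sequence as its own subsequence). One small caveat: the literal text of Corollary \ref{Cor:6} pairs the variables as $x\mathcal{R}y$, $v\mathcal{R}u$ in the contraction condition and in (E1), which is evidently a typo (its parent, Corollary \ref{Cor:4}, uses $\alpha_0(x,u)$ and $\alpha_0(v,y)$), so your reading $x\mathcal{R}u$, $v\mathcal{R}y$ is the intended one and is what makes the identification with (H1) and the Meir--Keeler hypothesis of Theorem \ref{Th:C2} go through.
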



\section{Application to a third order two point boundary value problem}

We study the existence and uniqueness of solution to the third order
differential equation
\begin{equation}
x^{\prime\prime\prime}(t)+f(t,x(t))=0,\quad t\in(0,1),\label{eq:20}%
\end{equation}
where $f\in C([0,1]\times\mathbb{R},\mathbb{R})$, with the boundary value
conditions
\begin{equation}
x(0)=x(1)=x^{\prime\prime}(0)=0.\label{eq:20a}%
\end{equation}
This problem is equivalent to finding a solution $x\in C([0,1],\mathbb{R})$ to
the integral equation
\[
x(t)=\int_{0}^{1}G(t,s)f(s,x(s))\,\mathrm{d}s,\
\]
where
\[
G(t,s)=\left\{
\begin{array}
[c]{ll}%
\frac{1}{2}(1-t)(t-s^{2}), & 0\leq s\leq t\leq1,\\[3mm]%
\frac{1}{2}t(1-s)^{2}, & 0\leq t\leq s\leq1.
\end{array}
\right.
\]
Clearly, $G(t,s)\geq0$ for all $t,s\in\lbrack0,1]$. Also, we can verify easily
that
\begin{equation}
\int_{0}^{1}G(t,s)\,\mathrm{d}s=\frac{t-t^{3}}{6}\leq\frac{\sqrt{3}}%
{27}\text{\quad for all }t\in\lbrack0,1].\label{eq:21}%
\end{equation}

Let $\Phi$ be the set of all nondecreasing functions $\varphi:[0,+\infty
)\rightarrow\lbrack0,+\infty)$ such that for all $\varepsilon>0$ there exists
$\delta(\varepsilon)>0$ with
\[
\varepsilon\leq t<\varepsilon+\delta(\varepsilon)\Longrightarrow
\varphi(t)<\varepsilon.
\]

Let $\xi:\mathbb{R}^{2}\rightarrow\mathbb{R}$ and $\varphi\in\Phi$. We
consider the following assumptions:

\begin{enumerate}
\item[\textrm{(J1)}] there exists $N\in\mathbb{N}\setminus\{0\}$ such that
\[
a_{0},a_{1},\dots,a_{N+1}\in\lbrack0,1]:\xi(a_{i},a_{i+1})\geq0\text{ for all
}i\in\{0,1,\ldots,N\}\Longrightarrow\xi(a_{0},a_{N+1})\geq0.
\]

\item[\textrm{(J2)}] for every $a,b\in\mathbb{R}$:
\[
\xi(a,b)\geq0\Longrightarrow\left\vert f(t,a)-f(t,b)\right\vert \leq9\sqrt
{3}\varphi(\left\vert a-b\right\vert )\quad\text{for all }t\in\lbrack0,1].
\]

\item[\textrm{(J3)}] for every $x,y\in C\left(  [0,1]\right)  $:
\[
\inf_{t\in\lbrack0,1]}\xi(x(t),y(t))\geq0\Longrightarrow\inf_{t\in\lbrack
0,1]}\xi\left(  \int_{0}^{1}G(t,s)f(s,x(s))\,\mathrm{d}s,\int_{0}%
^{1}G(t,s)f(s,y(s))\,\mathrm{d}s\right)  \geq0.
\]

\item[\textrm{(J4)}] there exists $x_{0}\in C\left(  [0,1]\right)  $ such
that
\[
\inf_{t\in\lbrack0,1]}\xi\left(  x_{0}(t),\int_{0}^{1}G(t,s)f(s,x_{0}%
(s))\,\mathrm{d}s\right)  \geq0
\]

\item[\textrm{(J5)}] for every $x,y\in C\left(  [0,1]\right)  $, there exist
$z_{0},z_{1},\ldots,z_{n}\in C\left(  [0,1]\right)  $ such that $z_{0}=x$,
$z_{n}=y$ and, for every $i\in\{1,2,\ldots,n\}$:
\[
\inf_{t\in\lbrack0,1]}\xi(z_{i-1}(t),z_{i}(t))\geq0\quad\text{or\quad}%
\inf_{t\in\lbrack0,1]}\xi(z_{i}(t),z_{i-1}(t))\geq0.
\]

\end{enumerate}

\begin{theorem}
\label{Th:A1}Let $f:[0,1]\times\mathbb{R}\rightarrow\mathbb{R}$ be continuous
and assume that there exist $\xi:\mathbb{R}^{2}\rightarrow\mathbb{R}$ and
$\varphi\in\Phi$ such that (J1)--(J4) are satisfied. Then the equation
(\ref{eq:20}) with the boundary conditions (\ref{eq:20a}) has solution. In
addition, if (J5) is satisfied, then the solution is unique.
\end{theorem}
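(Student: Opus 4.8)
The plan is to recast the boundary value problem as a fixed point problem for the integral operator and then invoke Corollary \ref{Cor:5}. I would work in the complete metric space $X=C([0,1],\mathbb{R})$ equipped with the supremum metric $d(x,y)=\sup_{t\in[0,1]}|x(t)-y(t)|$, define $T:X\to X$ by $(Tx)(t)=\int_0^1 G(t,s)f(s,x(s))\,\mathrm{d}s$, and introduce the binary relation $\mathcal{R}$ on $X$ by declaring $x\mathcal{R}y$ whenever $\inf_{t\in[0,1]}\xi(x(t),y(t))\geq 0$. Since solutions of (\ref{eq:20})--(\ref{eq:20a}) coincide with fixed points of $T$ via the stated equivalence, it suffices to verify the hypotheses (D1)--(D5) of Corollary \ref{Cor:5} for the triple $(X,\mathcal{R},T)$.

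The translation of the hypotheses is mostly direct. The $N$--transitivity of $\mathcal{R}$ would follow from (J1) applied pointwise in $t$: if $x_i\mathcal{R}x_{i+1}$ for $i=0,\dots,N$, then $\xi(x_i(t),x_{i+1}(t))\geq 0$ for every $t$ and every $i$, so (J1) yields $\xi(x_0(t),x_{N+1}(t))\geq 0$ for each $t$, and taking the infimum gives $x_0\mathcal{R}x_{N+1}$. Conditions (J3) and (J4) restate precisely that $T$ is $\mathcal{R}$--preserving (D1) and that there exists $x_0$ with $x_0\mathcal{R}Tx_0$ (D2), while (J5) restates that $X$ is $\mathcal{R}$--connected (D5). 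Continuity of $T$ (D3) is standard, using the continuity of $f$ together with a uniform continuity argument on the compact range of a convergent sequence.

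The main step, and the only genuine computation, is verifying that $T$ is an $\mathcal{R}$--contractive mapping of Meir--Keeler type. Here I would fix $\varepsilon>0$ and take the corresponding $\delta(\varepsilon)>0$ from the membership $\varphi\in\Phi$. For $x\mathcal{R}y$ one has $\xi(x(t),y(t))\geq 0$ for all $t$, so (J2) gives $|f(s,x(s))-f(s,y(s))|\leq 9\sqrt{3}\,\varphi(|x(s)-y(s)|)$. Combining the pointwise bound
\[
|(Tx)(t)-(Ty)(t)|\leq \int_0^1 G(t,s)\,|f(s,x(s))-f(s,y(s))|\,\mathrm{d}s
\]
with the monotonicity of $\varphi$ (so that $\varphi(|x(s)-y(s)|)\leq\varphi(d(x,y))$) and the Green's function estimate (\ref{eq:21}), the constant collapses as $9\sqrt{3}\cdot\frac{\sqrt{3}}{27}=1$, yielding $d(Tx,Ty)\leq\varphi(d(x,y))$. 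Then $\varepsilon\leq d(x,y)<\varepsilon+\delta(\varepsilon)$ forces $\varphi(d(x,y))<\varepsilon$ by the defining property of $\Phi$, hence $d(Tx,Ty)<\varepsilon$, as required.

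With (D1)--(D3) in hand, Corollary \ref{Cor:5} produces a fixed point of $T$, i.e.\ a solution of the boundary value problem; adding (D5) (supplied by (J5)) yields uniqueness. The delicate point to watch is the exact calibration of constants: the factor $9\sqrt{3}$ in (J2) is chosen precisely to cancel against the sharp bound $\frac{\sqrt{3}}{27}$ for $\int_0^1 G(t,s)\,\mathrm{d}s$, and the argument would fail if either constant were enlarged. I would also double-check the minor subtlety that (J1) must be applied with the real function values $x_i(t)$ rather than with arguments restricted to $[0,1]$.
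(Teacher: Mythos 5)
Your proposal is correct and follows essentially the same route as the paper: the paper works directly with the indicator function $\alpha(x,y)\in\{0,1\}$ determined by $\xi$ and applies Theorems \ref{Th:1} and \ref{Th:3}, which is exactly the construction underlying Corollary \ref{Cor:5}, so passing through the binary relation $\mathcal{R}$ is only a cosmetic repackaging; the contraction estimate with the cancellation $9\sqrt{3}\cdot\frac{\sqrt{3}}{27}=1$ matches the paper's computation. Your remark about (J1) being stated for $a_i\in[0,1]$ while the argument needs it for arbitrary real values $x_i(t)$ is well spotted --- the paper silently applies (J1) in the same way, so this appears to be a typo in the statement of (J1) rather than a gap in either proof.
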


\begin{proof}
Let $X:=C\left(  [0,1]\right)  $ be endowed with the metric
\[
d(u,v)=\max_{t\in\lbrack0,1]}|u(t)-v(t)|,\quad u,v\in X.
\]
It is well known that $(X,d)$ is a complete metric space. Define the mapping
$T:X\rightarrow X$ by
\[
(Tx)(t)=\int_{0}^{1}G(t,s)f(s,x(s))\,\mathrm{d}s\quad(x\in X,t\in\lbrack0,1]).
\]
The problem reduces to the fixed point problem for $T$.

Let $\alpha:X\times X\rightarrow\lbrack0,\infty)$ be defined by%
\[
\alpha(x,y)=\left\{
\begin{array}
[c]{ll}%
1, & \text{if }\xi(x(t),y(t))\geq0\text{ for all }t\in\lbrack0,1],\\
0, & \text{otherwise.}%
\end{array}
\right.
\]
It is easy to observe that $\alpha$ is $N$--transitive by (J1), $T$ is
$\alpha$--admissible by (J3) and $\alpha(x_{0},Tx_{0})\geq1$ by (J4). Also, it
follows in a standard fashion that $T$ is continuous, hence we omit this proof.

Now, using (J2), (\ref{eq:21}) and the fact that $\varphi$ is nondecreasing,
it follows that for all $x,y\in X$ with $\alpha(x,y)\geq1$:
\[
\left\vert (Tx)(t)-(Ty)(t)\right\vert \leq\int_{0}^{1}G(t,s)\left\vert
f(s,x(s))-f(s,y(s))\right\vert \,\mathrm{d}s\leq9\sqrt{3}\left(  \int_{0}%
^{1}G(t,s)\,\mathrm{d}s\right)  \varphi(d(x,y))\leq\varphi(d(x,y))\text{,}%
\]
hence%
\[
d\left(  Tx,Ty\right)  \leq\varphi(d(x,y))\text{\quad for all }x,y\in X\text{
with }\alpha(x,y)\geq1\text{.}%
\]
This clearly leads to
\begin{equation}
\alpha(x,y)d\left(  Tx,Ty\right)  \leq\varphi(d(x,y))\text{\quad for all
}x,y\in X\text{.}\label{eq:22}%
\end{equation}
Now, let $\varepsilon>0$. Since $\varphi\in\Phi$, there exists $\delta
(\varepsilon)>0$ such that
\begin{equation}
\varepsilon\leq a<\varepsilon+\delta(\varepsilon)\Longrightarrow
\varphi(a)<\varepsilon.\label{eq:23}%
\end{equation}
Let $x,y\in X$ with $\varepsilon\leq d(x,y)<\varepsilon+\delta(\varepsilon)$.
Then, by (\ref{eq:22}) and (\ref{eq:23}), it follows that%
\[
\alpha(x,y)d\left(  Tx,Ty\right)  \leq\varphi(d(x,y))<\varepsilon;
\]
hence, we conclude that $T$ is $\alpha$--contractive mapping of Meir-Keeler
type.\newline

Now, we can apply Theorem \ref{Th:1} and obtain the existence of a fixed point
of $T$, hence the existence of a solution to (\ref{eq:20})--(\ref{eq:20a}). In
addition, (J5) ensures that $X$ is $\alpha$--connected and the uniqueness of
the solution follows by Theorem \ref{Th:3}. The proof is now complete.
\end{proof}

\begin{corollary}
Let $f:[0,1]\times\mathbb{R}\rightarrow\mathbb{R}$ be continuous and assume
there exists $\varphi\in\Phi$ such that the following conditions are satisfied:

\begin{enumerate}
\item[\textrm{(K1)}] $0\leq f(t,b)-f(t,a)\leq9\sqrt{3}\varphi(b-a)$ for all
$t\in\lbrack0,1]$ and $a,b\in\mathbb{R}$ with $a\leq b$.

\item[\textrm{(K2)}] there exists $x_{0}\in C\left(  [0,1]\right)  $ such that
for all $t\in\lbrack0,1]$, we have
\[
x_{0}(t)\leq\int_{0}^{1}G(t,s)f(s,x_{0}(s))\,\mathrm{d}s.
\]

\end{enumerate}

Then (\ref{eq:20})-(\ref{eq:20a}) has a unique solution.
\end{corollary}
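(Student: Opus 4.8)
The plan is to deduce this corollary directly from Theorem \ref{Th:A1} by exhibiting a suitable auxiliary function $\xi$ and checking that conditions (J1)--(J5) hold for it. The natural choice is to take $\xi:\mathbb{R}^{2}\rightarrow\mathbb{R}$ to be $\xi(a,b)=b-a$, so that $\xi(a,b)\geq0$ is simply the relation $a\leq b$; correspondingly, for $x,y\in C([0,1])$ the condition $\inf_{t}\xi(x(t),y(t))\geq0$ becomes the pointwise order $x(t)\leq y(t)$ for all $t\in[0,1]$. With this dictionary in hand, each of the hypotheses (K1)--(K2) should translate almost verbatim into the abstract assumptions of Theorem \ref{Th:A1}, and the monotonicity built into (K1) is precisely what is needed to run the argument.

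I would first dispose of (J1)--(J4), which I expect to be immediate. For (J1): since $a\leq b$ and $b\leq c$ give $a\leq c$, the function $\xi(a,b)=b-a$ is transitive, hence $N$--transitive with $N=1$. For (J2): if $\xi(a,b)\geq0$, i.e.\ $a\leq b$, then (K1) yields $0\leq f(t,b)-f(t,a)\leq 9\sqrt{3}\,\varphi(b-a)$, so that $\lvert f(t,a)-f(t,b)\rvert=f(t,b)-f(t,a)\leq 9\sqrt{3}\,\varphi(\lvert a-b\rvert)$ for all $t\in[0,1]$, using $\lvert a-b\rvert=b-a$. For (J3): if $x(t)\leq y(t)$ for all $t$, then (K1) gives $f(s,x(s))\leq f(s,y(s))$ for every $s$, and since $G(t,s)\geq0$ it follows that $\int_{0}^{1}G(t,s)f(s,x(s))\,\mathrm{d}s\leq\int_{0}^{1}G(t,s)f(s,y(s))\,\mathrm{d}s$ for all $t$, which is exactly the required monotonicity of the integral operator. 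Finally, (J4) is literally the statement of (K2) once rewritten through $\xi$, so it holds for the same $x_{0}$.

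The one step that is not a pure translation is (J5), and this is where I would focus attention. Given arbitrary $x,y\in C([0,1])$, the idea is to connect them through their pointwise maximum $w:=\max(x,y)$, which again belongs to $C([0,1])$ because the maximum of two continuous functions is continuous. Taking the chain $(z_{0},z_{1},z_{2})=(x,w,y)$, one has $x\leq w$ and $y\leq w$ pointwise, so $\inf_{t}\xi(z_{0}(t),z_{1}(t))\geq0$ and $\inf_{t}\xi(z_{2}(t),z_{1}(t))\geq0$; this is precisely the alternative required in (J5) for each consecutive pair. Having verified (J1)--(J5) for the chosen $\xi$ and the given $\varphi\in\Phi$, Theorem \ref{Th:A1} applies and yields existence as well as uniqueness of the solution to (\ref{eq:20})--(\ref{eq:20a}), completing the proof. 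I do not anticipate any genuine obstacle here: the construction of the connecting chain via $\max(x,y)$ is the only nonmechanical ingredient, and it is standard.
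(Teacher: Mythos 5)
Your proposal is correct and follows essentially the same route as the paper: take $\xi(a,b)=b-a$, observe that (J1)--(J4) translate directly from (K1)--(K2), and verify (J5) by connecting $x$ and $y$ through their pointwise maximum. No gaps.
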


\begin{proof}
Consider the mapping $\xi:\mathbb{R}^{2}\rightarrow\mathbb{R}$ be defined by
$\xi(a,b)=b-a$ ($a,b\in\mathbb{R}$). Then the result follows straight from
Theorem \ref{Th:A1}. Indeed, $\xi$ clearly satisfies (J1), while (J2) and (J3)
follow by (K1). Condition (K2) ensures (J4), while (J5) follows easily, by
noting that for every $x,y\in C\left(  [0,1]\right)  $, the function%
\[
z:[0,1]\rightarrow\mathbb{R}\text{,\quad}z(t)=\max\{x(t),y(t)\}\text{ }%
(t\in\lbrack0,1])
\]
satisfies%
\[
z\in C\left(  [0,1]\right)  ,~\inf_{t\in\lbrack0,1]}\xi(x(t),z(t))\geq
0,~\inf_{t\in\lbrack0,1]}\xi(y(t),z(t))\geq0\text{.}%
\]

\end{proof}

\begin{remark}
Condition (K2) can be replaced by

\begin{enumerate}
\item[\textrm{(K2a)}] there exists $x_{0}\in C\left(  [0,1]\right)  $ such
that for all $t\in\lbrack0,1]$, we have
\[
x_{0}(t)\geq\int_{0}^{1}G(t,s)f(s,x_{0}(s))\,\mathrm{d}s,
\]
while all the other conditions and conclusions remain unchanged. In this case,
the proof follows similarly, by letting $\xi:\mathbb{R}^{2}\rightarrow
\mathbb{R}$ be defined by $\xi(a,b)=a-b$ ($a,b\in{\mathbb{R}}$).
\end{enumerate}
\end{remark}

\section*{Acknowledgement}

The second author is grateful for the financial support provided by the Sectoral
Operational Programme Human Resources Development 2007-2013 of the Romanian
Ministry of Labor, Family and Social Protection through the Financial
Agreement POSDRU/89/1.5/S/62557.
\section*{References}


\begin{thebibliography}{99}                                                                                               %


\bibitem {Agarwal2008}{\footnotesize R.P. Agarwal, M.A. El-Gebeily and D.
O'Regan, \textit{Generalized contractions in partially ordered metric spaces},
Appl. Anal. 87 (2008) 1--8. }

\bibitem {Berzig2012a}{\footnotesize M. Berzig and B. Samet, \textit{An
extension of coupled fixed point's concept in higher dimension and
applications}, Computers and Mathematics with Applications, 63 (2012)
1319--1334. }

\bibitem {Berzig2012b}{\footnotesize M. Berzig and B. Samet, \textit{Positive
solutions to periodic boundary value problems involving nonlinear operators of
{M}eir--{K}eeler-type}, Rendiconti del Circolo Matematico di Palermo, 61 (2)
(2012) 279--296. }

\bibitem {Bhaskar2006}{\footnotesize T.G. Bhaskar and V. Lakshmikantham,
\textit{Fixed point theory in partially ordered metric spaces and
applications}, Nonlinear Anal., 65 (2006) 1379--1393. }

\bibitem {Harjani2011}{\footnotesize J. Harjani, B. L\'opez and K.
Sadarangani, \textit{A fixed point theorem for {M}eir-{K}eeler contractions in
ordered metric spaces}, Fixed Point Theory and Appl. 1 (2011) 1--8. }

\bibitem {Kirk2003}{\footnotesize W.A. Kirk, and P.S. Srinivasan and P.
Veeramani, \textit{Fixed points for mappings satisfying cyclical contractive
conditions}, Fixed Point Theory, 4 (1) (2003) 79--89. }

\bibitem {Meir1969}{\footnotesize A. Meir and E. Keeler, \textit{A theorem on
contraction mappings}, J. Math. Anal. Appl. 28 (1969) 326--329. }

\bibitem {Nieto2005}{\footnotesize J.J. Nieto and R.R. L\'opez,
\textit{Contractive mapping theorems in partially ordered sets and
applications to ordinary differential equations}, Order. 22 (2005) 223--239. }

\bibitem {Nieto2007}{\footnotesize J. J. Nieto and R. R. L\'opez,
\textit{Existence and uniqueness of fixed point in partially ordered sets and
applications to ordinary differential equations}, Acta Math. Sinica, Engl.
Ser. 23 (12) (2007) 2205--2212. }

\bibitem {Ran2004}{\footnotesize A.C.M. Ran and M.C.B. Reurings, \textit{A
fixed point theorem in partially ordered sets and some applications to matrix
equations}, Proc. Amer. Math. Soc. 132 (2004) 1435--1443. }

\bibitem {Rus2011}{\footnotesize M.D. Rus, \textit{Fixed point theorems for
generalized contractions in partially ordered metric spaces with semi-monotone
metric}, Nonlinear Anal., 74 (5) (2011) 1804--1813. }

\bibitem {SametT2012}{\footnotesize B. Samet and M. Turinici, \textit{Fixed
point theorems for on a metric space endowed with an arbitrary binary relation
and applications}, Commun. Math. Anal., 13(2) (2012) 82--97. }

\bibitem {Samet2010}{\footnotesize B. Samet, \textit{Coupled fixed point
theorems for a generalized {M}eir--{K}eeler contraction in partially ordered
metric spaces}, Nonlinear Anal. 72 (2010) 4508--4517. }

\bibitem {SametVV2012}{\footnotesize B. Samet, C. Vetro and P. Vetro,
\textit{Fixed point theorems for $\alpha$-$\psi$-contractive type mappings},
Nonlinear Anal. 75(4) (2012) 2154--2165. }
\end{thebibliography}
\end{document}